\def\arxiv{}
\thanks{The first and the third author were partially supported by FAPESP grants 09/52379-8 and 2012/24086-9, respectively.}}
\newcommand{\bbR}{\mathbb{R}}
\newcommand{\IR}{I\kern -0.37 em R}
\newcommand{\bbH}{\mathbb{H}}
\newcommand{\IH}{I\kern -0.37 em H}
\newcommand{\IL}{I\kern -0.37 em L}
\newcommand{\bbN}{\mathbb{N}}
\newcommand{\bbZ}{\mathbb{Z}}
\newcommand{\cV}{\mathcal{V}}
\renewcommand{\bar}[1]{\overline{#1}}
\newcommand{\qi}{quasi-isometry\xspace}
\newcommand{\mmqi}{mm-quasi-isometry\xspace}
\newcommand{\rank}[1]{\operatorname{rank}\left(#1\right)}
\newcommand{\cl}[1]{\operatorname*{cl}\left(#1\right)}
\newcommand{\inte}[1]{\operatorname*{Int}\left(#1\right)}
\newcommand{\norma}[1]{\lVert#1\rVert}
\renewcommand{\epsilon}{\varepsilon}
\newcommand{\negrito}[1]{\textbf{#1}\index{#1}}
\def\P{{\mathbb{P}}}
\def\cB{{\mathcal B}}
\def\cB{{\mathcal B}}
\def\cX{{\chi}}
\def\cC{{\mathcal C}}
\newcommand{\bmodel}{(\chi, {R})}
\newtheorem{teo}{Theorem}[section]
\newtheorem{corol}[teo]{Corollary}
\newtheorem{ddefi}[teo]{Definition}
  {\begin{shaded} \begin{ddefi}\rm}{\end{ddefi} \end{shaded}}
\newenvironment{defi} 
  {\begin{ddefi}\rm}{\end{ddefi}}
\newtheorem{prop}[teo]{Proposition}
\newtheorem{lema}[teo]{Lemma}
\newtheorem{obss}[teo]{Observation}
\newtheorem{obss1}[teo]{Observations}
\newenvironment{exx}
  {\vspace{0.5cm} \refstepcounter{teo} \noindent  \textbf{Exemplo \arabic{chapter}.\arabic{teo}}}{\vspace{0.2cm} }
\newenvironment{Exemplos} 
  {\addtocounter{teo}{1} \vspace{0.1cm} \noindent \textbf{ Exemplos \arabic{chapter}.\arabic{teo}}}{ \vspace{0.2cm} }
\noindent \textbf{ Exemplos \arabic{chapter}.\arabic{teo}}}{\end{leftbar}  \vspace{0.2cm} }
\newenvironment{enumerar}{\begin{list}{\labelitemi}{\leftmargin=2em} }{\end{list}} 
\begin{document}

 \else
 
\documentclass[final,12pt]{elsarticle}
 \usepackage{amsmath,amsthm}
\usepackage{charter}
\usepackage[charter]{mathdesign}

\usepackage[utf8]{inputenc}

\usepackage{url,xspace}
\usepackage{enumerate}
\usepackage{todonotes}
\makeatletter
\def\ps@pprintTitle{%
 \let\@oddhead\@empty
 \let\@evenhead\@empty
 \def\@oddfoot{\centerline{\thepage}}%
 \let\@evenfoot\@oddfoot}
\makeatother
\def\bibsection{\section{References}}
\usepackage[T1]{fontenc}

\setlength{\topmargin}{-0.35in}
\setlength{\textheight}{9.0in}   
\setlength{\textwidth}{6in}    
\setlength{\oddsidemargin}{0.17in}
\setlength{\evensidemargin}{0.17in}
\setlength{\headheight}{25pt}
\setlength{\headsep}{8pt} 
\setlength{\footskip}{0.8cm}

\makeindex
\begin{document}

\title{Invariance under quasi-isometries of subcritical and supercritical behaviour in the
Boolean model of percolation }
\author[ufabc]{Cristian F. Coletti\fnref{fn1}\corref{cor1}}
\ead{cristian.coletti@ufabc.edu.br}

\author[ufabc]{Daniel Miranda}
\ead{daniel.miranda@ufabc.edu.br}

\author[uppssala]{Filipe Mussini\fnref{fn1}}
\ead{filipe.mussini@math.uu.se}

\fntext[fn1]{The first and the third author were partially supported by FAPESP grants 09/52379-8 and 2012/24086-9, respectively}

\address[ufabc]{Universidade Federal do ABC}
\address[uppsala]{Uppsala Universitet}

\begin{abstract}In this work we study the Poisson Boolean model of percolation in locally compact Polish metric spaces
 and we prove the invariance of subcritical and supercritical phases under mm-quasi-isometries. In other words, we prove that  if  the Poisson  Boolean model of percolation is subcritical or supercritical 
(or exhibits phase transition) in a metric space $M$  which is mm-quasi-isometric to a metric space $N$, then these phases also exist for the Poisson Boolean model of percolation in $N$.

Then we apply these results  to understand the phenomenon of phase transition in a 
large family of metric spaces. Indeed, we study the Poisson Boolean model of percolation in the context of Riemannian manifolds,  
in a large family of nilpotent Lie groups and in Cayley graphs. Also, we prove the 
existence of a subcritical phase in Gromov spaces with bounded growth at some scale.
\end{abstract}

\maketitle

\tableofcontents

\fi



\section{Introduction}

The aim of this work is to prove the invariance of subcritical and supercritical phases 
under mm-quasi-isometries. 
This result is in the same spirit of the result about recurrence and transience behaviour of random walks (Polya's Theorem) on graphs which are quasi-isometric to graphs where Polya's Theorem holds. For a precise statement 
of this result see, for instance, \cite{yuval}  and references therein.

The Boolean model of percolation may be  informally described as follow: first 
consider a simple point  process $\cX$ in a locally compact Polish
metric 
space $(M,d)$. Then, at each point of $\cX$, center a ball of random 
radius. Assume that the family of random radius forms an infinite countable collection of  i.i.d. random variable which are also independent of $\cX$.
Thus, the metric space $M$ is partitioned into two 
regions, the occupied region, which is
defined as the union of all random balls, and the vacant region, which is the 
complement of the occupied region.
We say that the Boolean model percolates if the occupied region has 
an unbounded connected component with positive probability.

This definition encompasses two models widely studied in the literature. The first model is the continuum Boolean model of percolation, in which the most prominent examples are the percolation model in the euclidean space $\mathbb{R}^n$ and in the hyperbolic space $\mathbb{H}^n$. The second model is the discrete Boolean model of percolation on graphs.

The history of continuum percolation began in 1961 when W. Gilbert 
\cite{gilbert1961random} introduced the random connection
model on the plane. In $1985$, S. Zuev and A. Sidorenko 
\cite{zuev1985continuous} considered continuum models of percolation where 
points are chosen randomly in space and surrounded by shapes which can be 
random  or fixed. In that work the authors studied the relation between 
critical  parameters associated to that model. In the same year, P. Hall 
\cite{hall1985continuum}  considered the (Poisson) Boolean model of
percolation in $\mathbb{R}^n$.  Denote
by $R$ one of the random radius in this model. P. Hall  proved 
that if the expectation of $R^{2n-1}$ is finite then, almost surely, there is 
no  unbounded component on the occupied region. For a comprehensive study of 
continuum models of percolation,  see the book of R. Meester and R. Roy 
\cite{roy}.

In $2001$, I. Benjamini and O. Schram considered  percolation models  in the 
hyperbolic plane and on regular tilings in the hyperbolic plane. Recently, the 
Boolean model of  percolation has received considerable attention. In 
\cite{tykesson}, J. Tykesson studied the Poisson 
Boolean continuum model of percolation in $\mathbb{H}^n$. He showed that, depending on the intensity of the underlying Poisson
point process, there are none, one or infinitely many unbounded components in the occupied and vacant regions.

In 2008, J. Gou\'{e}r\'{e} \cite{gouere} proved that for the Poisson Boolean model of 
percolation in 
$\mathbb{R}^n$ there is a subcritical phase if, and only if, $R$ has 
finite 
$n$-moment. Here, as before,
$R$ stands for one of the random radius in the model. 

Finally, the discrete version of the Boolean model of percolation was studied   on doubling spaces by C. Coletti, D. Miranda and 
S. Grynberg in 
\cite{cristiangrynbergdaniel}   where they proved the 
existence of a subcritical phase. As 
a byproduct of the subcriticality of the discrete Boolean model of percolation, C. Coletti and S. Grynberg  \cite{cristiangrynberg} constructed, forward in time, interacting particle systems with generator  admitting a Kalikow-type decomposition.

This paper is organized as follows. In section $2$ we give a brief 
introduction to the Boolean model of percolation. In section $3$ we 
define  quasi-isometries and  enunciate  the 
main result of this work concerning the invariance  under quasi-isometries of subcritical and supercritical phases. In section $4$ we provide examples of the use of this result in the study of subcritical and supercritical phases on Cayley graphs, (quasi-periodic) lattices of $\mathbb{R}^n$, Riemannian manifolds and nilpotent Lie groups. Lastly, in section $5$ we prove the main result of this work.

\section{The Boolean Model of Percolation}

A \textbf{ metric and measure space} or a \textbf{mm-space} is a triple $(M,d,\mu )$ where $d$ is a metric such that $M$ is a locally compact Polish metric spaces equipped with a Borel measure $\mu$ defined on $\left(M,\mathcal{B}\left(M\right)\right)$.

Let $\nu$ be a Radom measure on $\left(M,\mathcal{B}\left(M\right)\right)$. A point process on $M$ with intensity measure $\nu$ is defined as follows. Let $\Sigma$ be the set of counting measures on $\left(M, \mathcal{B}\left(M\right)\right)$. It is useful to identify $\Sigma$ with the set of all finite and infinite configurations of points in $M$ without limit points. Let $\mathcal{N}$ be the $\sigma$-algebra on $\Sigma$ generated by the cylinders $\{\nu \in \Sigma | \nu \left(E\right) = k\}$ where $k$ is any integer number and $E$ is any bounded Borel set. Formally, a point process is a measurable mapping $\chi$ from a probability space $\left(\Omega, \mathcal{F}, \mathbb{P}\right)$ into $\left(\Sigma, \mathcal{N}\right)$. We remind the reader that every member of the support of a Poisson point process has multiplicity $1$ under the hypothesis that the corresponding intensity measure assigns the value $0$ to each one-point set. Therefore, if $\chi$ is a Poisson point process on a locally compact Polish metric graph $M$ equipped with a Borel measure $\mu$, then each vertex of $M$ in the support of $\chi$ may have multiplicity greater than one.

Now we define the Boolean model of percolation with balls of fixed radius. First fix $R > 0$. Then to each $x$ in a realization of $\chi$ associate a radius $R$ and form the random ball $B\left(x, R\right)$. Let
$$
\cC := \bigcup_{x \in \chi} B(x,R)\qquad \mbox{ and } \qquad \cV = \cC^c.
$$
This procedure partitions the space into two regions. The random set $\cC$ is called  the \textbf{occupied region} and $\cV$ is called  the \textbf{vacant region}. Each connected 
component in the occupied (vacant) region is called \textbf{occupied (vacant) connected component}. The random set $\cC$ is the Boolean model with parameter $\nu$. The Boolean model is defined on the probability space $\left(\Omega, \mathcal{F}, \mathbb{P}\right)$ defined above. For simplicity we denote the Boolean model just by $\left(\chi, R\right)$.

We say that there is \textbf{percolation} in the occupied 
(vacant) region if there exists an unbounded occupied (vacant) connected component 
with positive probability. We call this event \textbf{percolation event}.

A Boolean model in a mm-space is said to be \textbf{homogeneous} if the intensity  measure $\nu$ of the corresponding point process is a multiple of $\mu$, i.e., $\nu=\lambda\mu$, where $\lambda > 0$. The parameter $\lambda$ is known as the intensity of the point process.

Let $\mathbb{P}_{\lambda, R}(\mbox{Percolation})$ be the probability of the percolation event for the homogeneous Boolean model with intensity $\lambda$ and balls of fixed radius $R$. A standard coupling argument shows that this probability is monotone in $\lambda$. Therefore, we can safely define
 $$
 \lambda_c(R) : = \sup\{\lambda : \mathbb{P}_{\lambda,R}(\mbox{Percolation}) =0\}.
 $$
\begin{defi}
  We say that the homogeneous Boolean model $\left(\chi, R\right)$ exhibits \textbf{phase transition} if there exists $\lambda>0$ such that $\lambda_c(R) >0$.
\end{defi}

\begin{defi}
The homogeneous Boolean model $\left(\chi ,R\right)$ with intensity $\lambda$ and balls of fixed radius $R$ in a mm-space $M$ is \textbf{supercritical} or exhibits a supercritical phase if $\lambda_c\left(R\right) > 0$ and $\lambda_c < \lambda$. Analogously, we say that the homogeneous Boolean model $\left(\chi ,R\right)$ with intensity $\lambda$ and balls of fixed radius $R$ in a mm-space $M$ is \textbf{subcritical} or that exhibits a subcritical phase if $\lambda_c\left(R\right) > 0$ and $0 < \lambda < \lambda_c$. 
\end{defi}  

We finish this section observing that the homogeneous Poisson Boolean model of percolation with intensity $\lambda$ on a metric graph $M$ correspond to the case of Bernoulli Boolean model of percolation on graphs. In the Bernoulli Boolean model of percolation with balls of fixed radius $R$ we center a ball of radius $R$ at each point in a realization of a Bernoulli point process on $M$ with retention parameter $p$. Indeed, a \emph{Bernoulli point process} on $M$ with retention parameters ${\bf{p}} = (p_m)_{m \in M}$, where $0 < p_m < 1$ for any $m \in M$, is a family of independent $\{0,1\}$-valued random variables $\chi=(X_m:\, m\in M)$ such that $\P(X_m=1)=p_m$. The relation between the parameters $\lambda$ and $p$ is as follows. If $\nu$ denotes the intensity measure of $\chi$, then $p_m=1-e^{-\lambda \nu(\{m\})}$ for any $m \in M$.

\section{Invariance}
\subsection{Quasi-Isometries}
The main theme of this paper is the invariance under mm-quasi-isometries of subcritical and supercritical behavior of the Boolean model of percolation. Roughly speaking, a quasi-isometry is a map between  
metric spaces that ignores small-scale structures. We now give a precise definition of quasi-isometry. 

\begin{defi}
Let $\left(M,d_M\right)$ and $\left(N,d_N\right)$ be metric spaces. A map $F: M \to N$ is called a \textbf{quasi-isometry} with parameters $\left(\alpha,\beta,\gamma\right)$ if there exist constants $\alpha \geq 1$ and $\beta,\gamma \geq 0$ such that
 \begin{enumerate}[a)]
  \item  $\forall x,y \in M : \frac{1}{\alpha}d_M(x,y) - \beta\leq d_{N}(F(x),F(y)) \leq \alpha d_M(x,y) +\beta\quad$ (quasi-distance preserving),  
  \item $\forall z \in N, \exists x \in M : d_{N}(z, F(x)) \leq \gamma \quad $ (quasi-surjectivity).
 \end{enumerate}
\end{defi}

It is easy to see thath quasi-isometry is an equivalence relation on metric spaces(\cite{bridson}). Indeed, for each quasi-isometry  $F: M \to N$ there exists a quasi-isometry $G: N \to M$ such that $d_M\left(G\left(F\left(x\right)\right),x\right) \leq \tilde{\gamma}$ and $d_N\left(F\left(G\left(y\right)\right),y\right) \leq \tilde{\gamma}$ for some parameter $\tilde{\gamma}$. The quasi-isometry $G$ is called the 
\textbf{quasi-inverse} of $F$. Moreover, we may choose the same set of parameters for both, the quasi-isometry and the corresponding quasi-inverse, as shown in \cite{shchur}.

We begin by proving some simple geometric facts about the image and preimage of balls under quasi-isometries.

\begin{lema}\label{quasibolamenor}\label{quasibolamaior}
Let $F: M \to N$ be a quasi-isometry with 
parameters $(\alpha,\beta,\gamma)$. Fix $p \in M$ and  $R >0$. Let $B=B(p,R)$ and $\bar{B} = F(B(c,R))$. Then
\begin{enumerate}[a)]

\item  There exists $R' > 0$ such that $\bar{B} \subset B(F(p),R')$,

\item If, for some $k > 0, R > \alpha\beta + 2\alpha k$ then there exists $R'> k$ such that $B(F(p), R')$ $\bigcap F(M) \subset \bar{B}$.

\end{enumerate}
\end{lema}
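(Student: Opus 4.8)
The plan is to read off both inclusions directly from the two-sided distance distortion bound (the quasi-distance preserving condition); the surjectivity constant $\gamma$ plays no role here, and the intersection with $F(M)$ in item b) is precisely what lets us dispense with it. Throughout I take $\bar{B} = F(B(p,R))$, reading the ``$c$'' in the statement as the fixed point $p$.

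For item a) I would simply bound the image. If $y \in \bar{B}$, write $y = F(x)$ with $d_M(x,p) < R$. The upper half of the distortion inequality, $d_N(F(x),F(p)) \leq \alpha d_M(x,p) + \beta$, then gives $d_N(y, F(p)) < \alpha R + \beta$, so that $R' := \alpha R + \beta$ works and $\bar{B} \subset B(F(p), R')$. This is the entire argument for a); only the upper bound is used (and if one prefers closed balls, any $R' > \alpha R + \beta$ does the job).

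For item b) I would use the lower half of the distortion inequality to control a point by its image, and take $R' := 2k$. Let $z \in B(F(p), R') \cap F(M)$ and write $z = F(x)$, so $d_N(F(x), F(p)) < 2k$. The lower bound $\tfrac{1}{\alpha} d_M(x,p) - \beta \leq d_N(F(x), F(p))$ yields $d_M(x,p) < \alpha(2k + \beta) = 2\alpha k + \alpha\beta$, and the hypothesis $R > \alpha\beta + 2\alpha k$ makes this right-hand side strictly smaller than $R$. Hence $x \in B(p,R)$, so $z = F(x) \in \bar{B}$, proving $B(F(p), 2k) \cap F(M) \subset \bar{B}$ with $R' = 2k > k$. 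More generally, any $R'$ with $k < R' \leq (R - \alpha\beta)/\alpha$ would work; the factor $2$ in the hypothesis is exactly what guarantees that the clean choice $R' = 2k$ is admissible.

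The only genuine subtlety is the necessity of intersecting with $F(M)$ in b). Without knowing that $z$ is actually an image $F(x)$, the lower distortion bound gives no control on any preimage, and points of $N$ lying near $F(p)$ but outside $F(M)$ need not belong to $\bar{B}$ at all. Beyond this, I expect no real obstacle: one only needs to keep the strict/non-strict ball conventions consistent and to remember that it is the two-sided distortion bound, not the quasi-surjectivity, that drives both inclusions.
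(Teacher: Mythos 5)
Your proof is correct and follows essentially the same route as the paper: part a) uses only the upper distortion bound to get $R'=\alpha R+\beta$, and part b) uses the lower distortion bound together with the hypothesis $R>\alpha\beta+2\alpha k$ to pull points of $B(F(p),R')\cap F(M)$ back into $B(p,R)$. The only cosmetic difference is your clean choice $R'=2k$, where the paper picks some $R'$ with $k<R'<2k$ satisfying $R>\alpha(R'+\beta)$; both choices are justified by the same inequality, and your observations that $\gamma$ is irrelevant and that intersecting with $F(M)$ is what makes b) true are accurate.
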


\begin{proof}
We begin by proving the first part of the lemma. If $y \in \bar{B}$, then there exists $x \in B(p,R)$ such that $F(x)=y$. Since $F$ is a quasi-isometry, then $d_N(F(p),F(x)) \leq \alpha d_M(p,x) +\beta \leq \alpha R+\beta=R'$.  Therefore $\bar{B} \subset B(F(p),R')$ as claimed.

Now we prove the second part of the lemma. First, note that, if $R> \alpha\beta +2\alpha k$, then there exists $R'$ such that $k < R'<2k$ and $R>\alpha(R'+\beta)$. Now let $y$ be a point in $M$ such that $F(y) \in B(F(p), R')$. Then $d_{N}(F(p),F(y))<R'$, 
which implies $d_M(p,y) <\alpha R'+\beta <R$. Consequently, $y \in B(p,r)$ and $F(y) \in \bar{B}$ which is the desired conclusion. 

\end{proof}

The following proposition will be essential for the construction of an induced 
Poisson process in $N$.

\begin{prop}\label{particaoqi}
 Let $F: M \to N$ be a quasi-isometry with parameters $(\alpha,\beta,\gamma)$ between locally compact Polish metric spaces. Then $N$ admits a countable disjoint covering $\{K_i\}$ such that $\cl{K_i}$ is compact. Here $\cl{A}$ stands for the closure of the set $A$. Moreover, the sets $E_i:= \{x \in M : F(x) \in K_i\}$ are disjoints, cover $M$ and have compact closure. 
\end{prop}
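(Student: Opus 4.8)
The plan is to build the covering $\{K_i\}$ directly from a compact exhaustion of $N$ and then to control the preimages $E_i$ through the quasi-distance-preserving inequality. First I would record that a locally compact Polish metric space is second countable (separability of a metric space gives a countable base), hence Lindel\"of, and therefore $\sigma$-compact; so I may fix an increasing sequence of compact sets $C_1 \subseteq C_2 \subseteq \cdots$ with $\bigcup_n C_n = N$. Disjointifying this exhaustion, I set $K_1 := C_1$ and $K_i := C_i \setminus C_{i-1}$ for $i \geq 2$. By construction the $K_i$ are pairwise disjoint and cover $N$, and since each $C_i$ is closed we have $\cl{K_i} \subseteq C_i$; being a closed subset of a compact set, $\cl{K_i}$ is itself compact. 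This settles the first assertion.

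For the second assertion I would first dispose of the two formal properties. Disjointness of the sets $E_i = \{x \in M : F(x) \in K_i\}$ is immediate from the disjointness of the $K_i$, and they cover $M$ because every $x \in M$ has $F(x) \in N = \bigcup_i K_i$, so $x \in \bigcup_i E_i$. The substantive point is the compactness of $\cl{E_i}$, and here I would invoke the lower bound in the quasi-isometry inequality. Fixing a base point $x_0 \in M$ and writing $y_0 = F(x_0)$, I note that $K_i \subseteq C_i$ is bounded, so there is $D_i$ with $d_N(y_0, z) \leq D_i$ for every $z \in K_i$. If $x \in E_i$ then $F(x) \in K_i$, whence $\frac{1}{\alpha} d_M(x_0,x) - \beta \leq d_N(y_0, F(x)) \leq D_i$, and therefore $d_M(x_0, x) \leq \alpha(D_i + \beta)$. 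Thus each $E_i$ is a bounded subset of $M$.

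The main obstacle is precisely the passage from boundedness of $E_i$ to compactness of $\cl{E_i}$, since in an arbitrary metric space a bounded closed set need not be compact. I would close this gap by exploiting local compactness jointly with an exhaustion of $M$: fixing an increasing compact exhaustion $L_1 \subseteq L_2 \subseteq \cdots$ of $M$ (again available by $\sigma$-compactness), the bounded set $E_i$ is contained in a fixed bounded region of $M$, and I would argue that this region is covered by finitely many relatively compact neighborhoods furnished by local compactness, so that $E_i \subseteq L_{n(i)}$ for some index $n(i)$; then $\cl{E_i}$ is a closed subset of the compact set $L_{n(i)}$ and is therefore compact. This is the step demanding genuine care, as it is exactly where the metric geometry of $M$ (and not merely its topology) enters, and hence where one must use that bounded subsets of $M$ have compact closure.
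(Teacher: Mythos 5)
Your construction of the $\{K_i\}$ and your reduction of the second assertion to boundedness of the $E_i$ are both fine, but they follow a different route from the paper: the paper disjointifies a countable subcover of the balls $\{B(y,\gamma)\}$ supplied by quasi-surjectivity, so that every $K_i$ has diameter at most $2\gamma$, whereas your annuli $C_i \setminus C_{i-1}$ need not be uniformly bounded. That uniform bound is not demanded by the statement, but it is tacitly used later in the paper (e.g.\ in the proof of Theorem \ref{variedadefunciona}, where one needs $K_i \subset B(x,C)$ for a fixed $C$), so your covering would be less useful downstream even if the proof were complete.

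The genuine gap is your final step, and the fix you sketch does not close it. From boundedness of $E_i$ you want $E_i \subseteq L_{n(i)}$ because the bounded region is ``covered by finitely many relatively compact neighborhoods furnished by local compactness''; but the ability to pass from such a covering to a \emph{finite} subcover is exactly the compactness of $\cl{E_i}$ that you are trying to prove --- local compactness together with the Lindel\"of property only yields countable covers. The argument is circular, and in fact the implication ``bounded $\Rightarrow$ relatively compact'' is false under the stated hypotheses, so no argument can rescue it: take $M = \mathbb{N}$ with the discrete metric ($d(m,n)=1$ for $m \neq n$), which is locally compact and Polish, and let $F$ be the constant map onto the one-point space $N$. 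Then $F$ is a quasi-isometry (with $\alpha=1$, $\beta = 1$, $\gamma = 0$), the only disjoint covering of $N$ by nonempty sets is $K_1 = N$, and $E_1 = M$ is bounded with noncompact closure, so the ``moreover'' clause fails outright. Your closing remark that ``one must use that bounded subsets of $M$ have compact closure'' is precisely an appeal to a properness (Heine--Borel) hypothesis that the proposition does not grant. For what it is worth, the paper's own proof stumbles at the same point: it asserts that local compactness of $N$ alone makes $\cl{K_i}$ compact (false, e.g., for $N = \mathbb{N}$ discrete once $\gamma \geq 1$) and dismisses the claims about the $E_i$ as easy; the statement really needs bounded subsets of $M$ and $N$ to be relatively compact. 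But judged as a proof of the proposition as stated, your last step fails.
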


\begin{proof}
 Let $F: M \to N$ be a \qi with parameters $(\alpha,\beta,\gamma)$. Then, for each $y \in N$, there exists $x \in M$ such that $d_{N}(y,F(x)) < \gamma$. Now consider the collection of all open balls $\{B(y,\gamma) : y \in N\}$. Since $N$ is a separable and metrizable space, then it satisfies the second enumerability axiom. Therefore, this covering contains a countable subcovering $\{B(y_n,\gamma) : n \in \bbN \}$. Now define
 
 $$
 K_i := B(y_i,\gamma) \setminus \bigcup_{j=1}^{i-1} B(y_j, \gamma).
 $$
 It is easy to check that the sets $K_i$ are disjoints and cover $N$. Since $N$ is locally compact, the closure of each $K_i$ is compact \cite{vaughan}. The properties of the collection of sets $\{E_i : i \in \mathbb{N}\}$ easily follow from the fact that $E_i = F^{-1}(K_i)$.
\end{proof}

\subsubsection{A short description of the construction of Poisson processes} \label{ppconstruction} 

Now we give a short description of how to construct Poisson process with a given intensity measure $\nu$ on locally compact spaces. Let $\left(E_i\right)_{i \in \mathbb{N}}$ be the countable measurable partition of $M$ given by Proposition \ref{particaoqi}. For each $i$, let ${\bf{Y_i}} = \left(Y^i_k:k \in \mathbb{N}\right)$ be an i.i.d. sequence having common distribution $\frac{1}{\nu(E_i)} \nu$, and $N_i$ a Poisson random variable that is independent of the sequence ${\bf{Y_i}}$ and has mean $\nu\left(E_i\right)$. Then make $X_i = \sum_{k=1}^{N_i} \delta_{Y^i_k}$ with the convention that the empty sum equals the zero measure. Then $X = \sum_{i=1}^{\infty} X_i$ is a Poisson point process on $M$ with intensity measure $\nu$. For further details on the construction of Poisson processes in metric spaces see, for instance, \cite{gray}. 

\subsubsection{Measures induced by quasi-isometries and mm quasi-isometries} 

Let $F: M \to N$ be a \qi between metric spaces and let $\mu$ and $\mu'$ be 
Borel  measures on $\left(M,\mathcal{B}\left(M\right)\right)$ and $\left(N,\mathcal{B}\left(N\right)\right)$, respectively. 
Define $\mu^*(K_i) = 
\mu(F^{-1}(K_i)) = \mu (E_i)$, where the sets $K_i$ and $E_i$ are the ones given 
by Proposition \ref{particaoqi}. We extend the definition of $\mu^*$ to 

\begin{equation}\label{muestrela}
\mu^*(D) = \mu'(D)\displaystyle \sum_{i: D \cap K_i \neq \emptyset} \frac{\mu^*(K_i)}{\mu'(K_i)},
\end{equation}
where $D \in \cB(N)$.
It is easy to check that $\mu^*$ is a measure in $N$. We call $\mu^*$ the \negrito{quasi-isometry induced measure}. We observe that the measure $\mu^*$ depends on the particular covering $\{K_i\}$. In the set up of this work this does not represent a problem since the measure $\mu^*$ will be dominated from above and from below by the intensities measures of two point processes which will be enough for our purposes. For more details see Section $5$.

\begin{defi}
 Let $F:M \to N$ be a \qi between mm-spaces and let $\mu$ and $\mu'$ be two measures defined in $\mathcal{B}\left(M\right)$ and $\mathcal{B}\left(N\right)$ respectively. Let $\{K_i\}$ be the covering of $N$ given by Proposition \ref{particaoqi}. We say that the \qi is a \negrito{\mmqi} or that it is \negrito{ measure compatible} if there exist strictly positive constants $C_1, C_2,C_3, C_4$ such that, for any $i$, $C_1 < \mu'(K_i) < C_2$ and $C_3 < \mu^*(K_i) < C_4$.
\end{defi}

Henceforth, $\mu$ and $\mu'$ will always denote  measures defined in $\mathcal{B}\left(M\right)$ and $\mathcal{B}\left(N\right)$ respectively.

\subsection{Main Result}
Now we are ready to state the main result of this paper.
\begin{teo}\label{teomaisimp}
 Let $F: M \to N$ be a mm-quasi-isometry with parameters 
$(\alpha,\beta,\gamma)$. Let $\bmodel$ be a homogeneous Poisson
Boolean model in $M$ driven by a homogeneous Poisson point process with intensity $\lambda$.

\begin{enumerate}[a)]
\item Assume that $\bmodel$ is subcritical and that $R > \alpha\beta+2\alpha\gamma$. Then, there exist $\lambda' > 0$ and $R' > 0$ such that the homogeneous Poisson Boolean model $(\chi', R')$ in $N$ driven by a homogeneous Poisson point process $\chi'$ with intensity $\lambda'$ is subcritical .

\item Assume that $\bmodel$ is supercritical. Then, there exist $\lambda' > 0$ and $R' > 0$ such that the homogeneous Poisson Boolean model $(\chi', R')$ in $N$ driven by a homogeneous Poisson point process $\chi'$ with intensity $\lambda'$ is supercritical.
\end{enumerate}

\end{teo}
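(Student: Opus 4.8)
The plan is to transport the model between $M$ and $N$ through the covering $\{K_i\}$ and its preimages $E_i=F^{-1}(K_i)$ furnished by Proposition~\ref{particaoqi}, comparing occupied clusters cell by cell. Two structural features make this work. First, each cell satisfies $K_i\subset B(y_i,\gamma)$, so any two points lying in a common cell are at $N$-distance at most $2\gamma$; hence, as soon as the radius used in $N$ exceeds $\gamma$, points sharing a cell lie automatically in the same occupied component, and one may ``collapse'' each cell to a representative point at the cost of distorting distances by at most $2\gamma$. Second, the \mmqi hypothesis supplies the uniform bounds $C_1<\mu'(K_i)<C_2$ and $C_3<\mu^*(K_i)<C_4$, where $\mu^*$ is the induced measure of~(\ref{muestrela}); these let me dominate, cell by cell, the Poisson counts of a homogeneous process on $N$ by those coming from the process on $M$ (and conversely), just by choosing $\lambda'$ large or small relative to $\lambda$.

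For the supercritical part (b) I would use the image estimate, Lemma~\ref{quasibolamenor}(a): $F(B(x,R))\subset B(F(x),\alpha R+\beta)$. Starting from the supercritical process $\chi$ on $M$, push it forward by $F$; its count in each $K_i$ is Poisson with mean $\lambda\mu^*(K_i)\le\lambda C_4$. Choosing $\lambda'$ with $\lambda' C_1\ge\lambda C_4$ I can couple a homogeneous process $\chi'$ of intensity $\lambda'\mu'$ on $N$ so that $\chi'(K_i)\ge1$ whenever $\chi(E_i)\ge1$. If $x_1,x_2,\dots$ is an unbounded occupied cluster in $M$ (consecutive centres within $2R$), then $d_N(F(x_m),F(x_{m+1}))\le 2\alpha R+\beta$, and the coupled points $z_m\in K_{i_m}$ satisfy $d_N(z_m,z_{m+1})\le 2\alpha R+\beta+4\gamma$. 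Taking $R'=\alpha R+\frac{\beta}{2}+2\gamma$ turns the $z_m$ into an unbounded occupied cluster in $N$; since $F$ sends unbounded sets to unbounded sets (lower quasi-isometry bound), this occurs with positive probability, so $(\chi',R')$ percolates. No restriction on $R$ is needed here, because enlarging $R'$ only helps percolation.

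For the subcritical part (a) the comparison is reversed and the radius on $N$ must be kept small, which is exactly what the hypothesis $R>\alpha\beta+2\alpha\gamma$ secures through the preimage estimate Lemma~\ref{quasibolamenor}(b): with $k=\gamma$ it yields $R'$ with $R>\alpha(R'+\beta)$, i.e. $\alpha(2R'+\beta)<2R$. Take a homogeneous process $\chi'$ on $N$ with intensity $\lambda'\mu'$, $\lambda'$ so small that $\lambda' C_2\le\lambda C_3$; then I couple it with the subcritical process $\chi$ on $M$ so that $\chi(E_i)\ge1$ whenever $\chi'(K_i)\ge1$. Given an unbounded occupied cluster $z_1,z_2,\dots$ in $N$ (radius $R'$), pick for each $m$ a coupled centre $x_m\in E_{i_m}$; since $z_m$ and $F(x_m)$ share the cell $K_{i_m}$, one gets $d_N(F(x_m),F(x_{m+1}))\le 2R'+4\gamma$ and hence $d_M(x_m,x_{m+1})\le\alpha(2R'+4\gamma+\beta)$. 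Choosing $R'$ small enough (the slack in $R>\alpha\beta+2\alpha\gamma$ permits this) forces $d_M(x_m,x_{m+1})\le 2R$, so the $x_m$ form an unbounded occupied cluster in $M$, contradicting subcriticality. Thus $(\chi',R')$ does not percolate at this $\lambda'$, giving $\lambda_c^N(R')\ge\lambda'>0$ and a genuine subcritical phase.

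The step I expect to be the main obstacle is reconciling the \emph{induced} (push-forward) process, whose law inside each cell is dictated by $F$ and $\mu^*$, with a genuinely \emph{homogeneous} Poisson process of intensity $\lambda'\mu'$ on $N$: the two agree only up to the cell-wise constants $C_1,\dots,C_4$, so the argument must be run at the level of cell-occupancy (site) variables rather than the points themselves, and one must verify that the cell-collapse distorts distances by at most $2\gamma$ and the radii by controllable amounts. Keeping this distortion compatible with the \emph{fixed} radius $R$ on $M$ in the subcritical direction—where radii cannot be enlarged without aiding percolation—is the delicate point, and is precisely where the quantitative hypothesis $R>\alpha\beta+2\alpha\gamma$ and Lemma~\ref{quasibolamenor}(b) enter.
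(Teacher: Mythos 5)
Your argument is correct, and it rests on the same two pillars as the paper's proof: the partition $\{K_i\}$, $E_i=F^{-1}(K_i)$ of Proposition~\ref{particaoqi}, whose cells have diameter at most $2\gamma$, and a cell-by-cell coupling of Poisson counts governed by the \mmqi constants. But your factorization of the argument is genuinely different. The paper passes through two intermediate objects: the image model with centres $F(\chi)$ (Proposition~\ref{inducedBooleanmodel}, proved via the ball-inclusion Lemma~\ref{quasibolamenor}), and then the non-homogeneous induced Poisson model with intensity $\mu^*$, coupled to $\chi$ by matching the counts in $E_i$ and $K_i$ \emph{exactly} (Theorem~\ref{existqi}); only afterwards does it pass to a homogeneous model, by a thinning argument (Propositions~\ref{proposicaomuestrela} and~\ref{homoqi}). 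You skip both intermediates: choosing $\lambda' C_1\ge\lambda C_4$ (resp.\ $\lambda' C_2\le\lambda C_3$) lets you stochastically dominate the cell occupancies of one homogeneous model by the other in a single step, and the geometric transfer is done by explicit chain arithmetic on cluster centres rather than by ball inclusions. What the paper's route buys is a collection of reusable stand-alone statements (notably the homogenization proposition and the induced model with intensity $\mu^*$); what yours buys is brevity, explicit formulas for $R'$ and $\lambda'$, and even a slightly weaker hypothesis in part (a), since $\alpha(2R'+4\gamma+\beta)\le 2R$ admits a positive solution $R'$ as soon as $R>\frac{\alpha\beta}{2}+2\alpha\gamma$. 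Two loose ends remain, but the paper shares both: extracting from an unbounded occupied component an unbounded sequence of centres with consecutive distances at most $2R'$ (resp.\ $2R$) deserves a sentence on connectedness of the ball-intersection graph; and in part (b), percolation of $(\chi',R')$ at $\lambda'$ yields supercriticality in the paper's sense only once one also knows $\lambda_c(R')>0$, a point neither your proof nor the paper's addresses.
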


The proof of Theorem \ref{teomaisimp} will be given in section \ref{provadoteomaisimp}. The following corollaries follows immediately from Theorem \ref{teomaisimp} and their proofs are omitted.

\begin{corol}\label{corolimp1}
Let $F: M \to N$ be a mm-quasi-isometry with parameters 
$(\alpha,\beta,\gamma)$. Let $\bmodel$ be a homogeneous Poisson  Boolean model in $M$ with $R > \alpha\beta+2\alpha\gamma$ exhibiting phase 
transition.  Then there exist $\lambda > 0$ and $R' > 0$ such that the homogeneous Poisson Boolean model $(\chi', R')$ in $N$ driven by a homogeneous Poisson point process $\chi'$ with intensity $\lambda$ exhibites phase transition.
\end{corol}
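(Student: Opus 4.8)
The plan is to transport to $N$ each of the two phases that a phase transition in $M$ provides, invoking the two parts of Theorem \ref{teomaisimp} separately. Throughout I write $\lambda_c^M(R)$ and $\lambda_c^N(R')$ for the critical intensities of the fixed–radius homogeneous models in $M$ and in $N$.

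First I would unpack the hypothesis. By definition, the fact that $\bmodel$ exhibits phase transition in $M$ means $\lambda_c^M(R)>0$, so that every intensity $\lambda\in\bigl(0,\lambda_c^M(R)\bigr)$ renders the homogeneous model $\bmodel$ subcritical. Since we are given $R>\alpha\beta+2\alpha\gamma$, this subcritical model satisfies exactly the hypotheses of Theorem \ref{teomaisimp}(a), which then furnishes $\lambda'>0$ and $R'>0$ for which $(\chi',R')$ with intensity $\lambda'$ is subcritical in $N$. In particular $\lambda_c^N(R')>0$, which is precisely the statement that $(\chi',R')$ exhibits phase transition in $N$. With the notion of phase transition adopted in the paper this already establishes the corollary, which is why it is ``immediate.''

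To see, moreover, that a genuine supercritical regime is inherited as well, I would pick (whenever $\lambda_c^M(R)<\infty$) an intensity above $\lambda_c^M(R)$ so that $\bmodel$ becomes supercritical, and then apply Theorem \ref{teomaisimp}(b) to obtain a radius $R''$ with $\lambda_c^N(R'')<\infty$. The hard part will be that these two transfers do not return the same radius: the subcritical comparison of part (a) is driven by the inner–ball estimate of Lemma \ref{quasibolamenor}(b), with $k=\gamma$, and hence produces a \emph{small} radius $R'$, whereas the supercritical comparison of part (b) rests on the outer–ball estimate of Lemma \ref{quasibolamenor}(a) and produces a \emph{large} radius $R''$, so that $R'\le R''$. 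Because $\lambda_c^N(\cdot)$ is at best monotone in the radius, this monotonicity alone will not force a single radius to carry simultaneously a positive and a finite critical intensity; merging the two into one radius would require re-running one transfer against the other's target radius or comparing the two $N$–models through an explicit coupling. For the corollary as phrased, however, only $\lambda_c^N(R')>0$ is needed, so the argument of the second paragraph suffices and no such reconciliation is required.
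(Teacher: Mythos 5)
Your proof is correct and is essentially the argument the paper has in mind: the paper omits the proof of Corollary \ref{corolimp1} as immediate from Theorem \ref{teomaisimp}, and your first two paragraphs --- pick $\lambda \in \bigl(0,\lambda_c^M(R)\bigr)$ so that $\bmodel$ is subcritical, apply part (a), and read off $\lambda_c^N(R')>0$ from the paper's definition of subcriticality --- are precisely that immediate derivation. Your closing observation, that additionally transferring the supercritical phase via part (b) could yield a different radius and is not needed under the paper's definition of phase transition (namely $\lambda_c(R)>0$), is also accurate and correctly set aside.
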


Henceforth, for simplicity whenever we say "for sufficiently large radius" we mean $R > \alpha\beta+2\alpha\gamma$ where $(\alpha,\beta,\gamma)$ are
the parameters of a mm-quasi-isometry $F: M \to N$ between metric spaces.

\begin{corol}\label{corolimp2}
For sufficiently large radius, the existence of subcritical and supercritical phases and phase transition in the homogeneous Poisson  Boolean model is invariant under mm-quasi-isometries admitting a mm-quasi-inverse.
\end{corol}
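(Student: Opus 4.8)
The statement is an equivalence claim: since \qi is an equivalence relation, calling the three properties (existence of a subcritical phase, of a supercritical phase, and of phase transition) \emph{invariant} means that each holds in $M$ if and only if it holds in $N$. The plan is therefore to obtain both implications by applying Theorem \ref{teomaisimp} and Corollary \ref{corolimp1} twice, once to $F$ and once to its mm-quasi-inverse, so that the corollary becomes a purely formal consequence of the main theorem applied symmetrically.

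First I would dispatch the forward implications, which are immediate. If $\bmodel$ is subcritical (respectively supercritical) in $M$ with $R > \alpha\beta + 2\alpha\gamma$, then part (a) (respectively part (b)) of Theorem \ref{teomaisimp} produces $\lambda'>0$ and $R'>0$ for which the induced model $(\chi',R')$ in $N$ is subcritical (respectively supercritical); in particular $\lambda_c'(R')>0$, so $N$ exhibits the corresponding phase. The phase-transition case is handled identically by Corollary \ref{corolimp1}. Hence $N$ exhibits a subcritical phase, a supercritical phase, or phase transition whenever $M$ does.

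For the converse I would invoke the standing hypothesis that $F$ admits a mm-quasi-inverse, that is, a quasi-inverse $G:N\to M$ that is itself an \mmqi. As recorded just after the definition of \qi (following \cite{shchur}), $G$ may be taken with the same parameters $(\alpha,\beta,\gamma)$ as $F$; consequently the radius condition $R > \alpha\beta + 2\alpha\gamma$ is literally the same in both directions, and the phrase ``for sufficiently large radius'' is unambiguous. Since $G$ is an \mmqi, applying Theorem \ref{teomaisimp} and Corollary \ref{corolimp1} to $G$ (now with $N$ as source and $M$ as target) yields the reverse implications: a subcritical, supercritical, or phase-transition regime in $N$ forces one in $M$. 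Combining the two directions establishes the asserted invariance.

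The only point that requires care, and the one place where the extra hypothesis does real work, is that $G$ be \emph{measure compatible} and not merely a quasi-isometry. A bare quasi-inverse would reverse the geometric comparison of balls (Lemma \ref{quasibolamenor}) but would fail to control the induced measure $\mu^*$ on $M$, and it is precisely this control that Theorem \ref{teomaisimp} relies upon through the uniform bounds $C_1<\mu'(K_i)<C_2$ and $C_3<\mu^*(K_i)<C_4$. Assuming that the quasi-inverse is an \mmqi removes this obstacle, and no estimate beyond those already proved for Theorem \ref{teomaisimp} is needed; the corollary follows by symmetry.
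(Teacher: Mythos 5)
Your proof is correct and matches the paper's intent: the paper omits the proof of Corollary \ref{corolimp2}, stating only that it ``follows immediately from Theorem \ref{teomaisimp}'', and your argument (apply the theorem and Corollary \ref{corolimp1} to $F$ for one direction and to its mm-quasi-inverse $G$ for the other, using that $G$ can share the parameters $(\alpha,\beta,\gamma)$) is exactly the natural formalization of that claim. Your closing observation that measure compatibility of the quasi-inverse is the essential extra hypothesis --- a bare quasi-inverse would not control the induced measure --- correctly identifies why the corollary is stated for mm-quasi-isometries admitting an mm-quasi-inverse rather than arbitrary ones.
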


\section{Applications}
Before we prove Theorem \ref{teomaisimp}, we give some examples to which this theorem apply in order to understand the phase transition phenomenon in some  families of metric spaces. First, we prove that the existence of phase transition for the Boolean model of percolation on Cayley graphs of finitely generated groups does not depend on the choice of the generating set. Secondly, we prove that the Boolean model is subcritical (supercritical) in a Riemannian manifold $M$ if and only if the Boolean model is subcritical (supercritical) on the graph obtained trough a discretization procedure of $M$. As a corollary of this result, we prove the existence of phase transition in a large family of nilpotent Lie groups, tilings, etc. Finally, we prove the  existence of a subcritical phase in Gromov  spaces with bounded growth at some scale.

\subsection{Cayley Graphs}

Let $H$ be a finitely generated group and let $S$ be a symmetric generating set.  
The \negrito{Cayley Graph} $C(H,S)$ is the graph whose set of vertices is $H$ and there is an edge joining $u$ and $v$ if and only if $u =  sv$.

 Given a finitely generated group $H$ and a finite generating set $S$, we define the \negrito{word norm} by
$$
\norma{g}_S := \inf_n\{s_i \in S : s_1 \dots s_n = g\}
$$
and the \negrito{word metric} by $d_S(g,h): = \norma{g^{-1}h}_S.$ 

It is clear that the geometry of a  Cayley graph depends on the choice of the generating set. However, as we will show below, the existence of a subcritical or supercritical phase does not depend on that choice. Indeed, next theorem will be essential for this purpose.

\begin{teo}[\cite{bridson}] \label{cayleygraph}
 Let $H$ be a finitely generated group and let $S$ and $S'$ be two distinct generating sets. Then the Cayley graphs $C(H,S)$ and $C(H,S')$ are quasi-isometric.
\end{teo}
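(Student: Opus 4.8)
The plan is to exhibit an explicit quasi-isometry, namely the identity map on $H$ viewed as a map $\mathrm{id}\colon C(H,S) \to C(H,S')$. Since both Cayley graphs have the same vertex set $H$, the identity is a bijection, so the quasi-surjectivity condition (b) of the definition holds trivially with $\gamma = 0$. The whole content of the proof therefore reduces to verifying the quasi-distance-preserving inequality (a) between the two word metrics $d_S$ and $d_{S'}$; and since $d_S(g,h) = \norma{g^{-1}h}_S$ and $d_{S'}(g,h) = \norma{g^{-1}h}_{S'}$, it suffices to compare the two word norms $\norma{\cdot}_S$ and $\norma{\cdot}_{S'}$ on $H$.

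The key step is a mutual domination estimate between the norms. Because $S'$ generates $H$ and $S$ is finite, each generator $s' \in S'$ can be written as a word in the letters of $S$; I set $M := \max_{s' \in S'} \norma{s'}_S$, which is finite precisely because $S'$ is a finite set. Given $g \in H$, I would take a geodesic representation $g = s'_1 \cdots s'_k$ realizing $k = \norma{g}_{S'}$, and substitute for each letter $s'_j$ an $S$-word of length at most $M$. Concatenating these yields an $S$-word representing $g$ of total length at most $Mk$, whence $\norma{g}_S \le M\,\norma{g}_{S'}$. Running the symmetric argument with $M' := \max_{s \in S} \norma{s}_{S'}$ gives $\norma{g}_{S'} \le M'\,\norma{g}_S$.

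Applying both inequalities to the element $g^{-1}h$ and putting $\alpha := \max(M,M')$, which is at least $1$ as soon as $H$ is nontrivial, one obtains
\[
\tfrac{1}{\alpha}\, d_S(g,h) \;\le\; d_{S'}(g,h) \;\le\; \alpha\, d_S(g,h)
\]
for all $g,h \in H$. This is exactly condition (a) with additive constant $\beta = 0$. Hence the identity is a quasi-isometry with parameters $(\alpha,0,0)$, and $C(H,S)$ and $C(H,S')$ are quasi-isometric.

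I do not anticipate any real obstacle here; the result is classical in geometric group theory (see \cite{bridson}). The only points demanding minor care are the finiteness of the constants $M$ and $M'$, which is guaranteed by the finiteness of the generating sets, and the observation that substituting bounded-length $S$-words into a geodesic $S'$-representative inflates the length by at most the multiplicative factor $M$ (with no additive error), so that the comparison is purely multiplicative and $\beta$ may be taken to be $0$.
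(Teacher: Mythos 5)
Your proof is correct and is exactly the standard argument: the paper offers no proof of this statement (it is quoted from \cite{bridson}), and the classical proof in that reference is the same one you give, namely the identity map on $H$ together with the mutual bi-Lipschitz domination of the two word metrics. The hypothesis you rightly isolate --- finiteness of both generating sets, needed for $M$ and $M'$ to be finite --- is implicit in the paper's conventions (its definition of the word norm already assumes a finite generating set), and the theorem would indeed fail without it.
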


The following theorem allows, when restricted to uniformly bounded graphs such as $C(H,S)$, to  remove the measure compatible hypothesis in Theorem \ref{teomaisimp}.

\begin{teo}\label{discretofunciona}
Let $F: M \to N$ be a \qi between uniformly bounded graphs equipped with the counting measure on $\left(M,\mathcal{P}\left(M\right)\right)$ and $\left(N,\mathcal{P}\left(N\right)\right)$ respectively. Then $F$ is \mmqi.
\end{teo}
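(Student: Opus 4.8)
The plan is to read ``uniformly bounded'' as bounded geometry: there is an integer $\Delta$ bounding the degree of every vertex or, equivalently, for each $r>0$ there are finite constants $V_M(r)$ and $V_N(r)$ with $\#\big(B(x,r)\big)\le V_M(r)$ for every vertex $x\in M$ and $\#\big(B(y,r)\big)\le V_N(r)$ for every vertex $y\in N$. This uniform ball bound is the only structural input I would need, and it follows from a degree bound $\Delta$ by the standard shell count $\#B(x,r)\le 1+\Delta\sum_{k=0}^{\lfloor r\rfloor-1}(\Delta-1)^{k}$. Recall that $\mu,\mu'$ are the counting measures, that $\{K_i\}$ is the partition of $N$ from Proposition \ref{particaoqi} with $K_i\subset B(y_i,\gamma)$, that $E_i=F^{-1}(K_i)$, and that $\mu^*(K_i)=\mu(E_i)$. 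The goal is to produce constants $C_1,C_2,C_3,C_4>0$ with $C_1<\mu'(K_i)<C_2$ and $C_3<\mu^*(K_i)<C_4$ for every $i$.

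For the two upper bounds I would argue directly from bounded geometry. Since $K_i\subset B(y_i,\gamma)$, the counting measure gives $\mu'(K_i)\le \#B(y_i,\gamma)\le V_N(\gamma)=:C_2$. For $\mu^*$ I would first bound the diameter of $E_i$: if $x,x'\in E_i$ then $F(x),F(x')\in K_i\subset B(y_i,\gamma)$, so $d_N(F(x),F(x'))<2\gamma$, and the lower quasi-isometry inequality $\frac{1}{\alpha}d_M(x,x')-\beta\le d_N(F(x),F(x'))$ yields $d_M(x,x')<\alpha(2\gamma+\beta)$. Hence $E_i$ is contained in a single ball of radius $D:=\alpha(2\gamma+\beta)$ in $M$, and $\mu^*(K_i)=\mu(E_i)\le V_M(D)=:C_4$; this also covers the degenerate case $E_i=\emptyset$.

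The two lower bounds use that the counting measure is integer valued, so a nonempty set has measure at least $1$. Each $K_i$ is nonempty by construction, whence $\mu'(K_i)\ge1$ and one may take $C_1=\frac{1}{2}$. The delicate point, and the step I expect to be the main obstacle, is the lower bound $\mu^*(K_i)>C_3$, which amounts to showing that every cell $K_i$ has nonempty $F$-preimage, i.e.\ meets the image $F(M)$. This is not automatic from Proposition \ref{particaoqi}, because there the covering balls are centered at arbitrary points $y_i\in N$ while $F$ is only quasi-surjective, so a cell could lie entirely in the collar $N\setminus F(M)$ and have $\mu^*(K_i)=0$. I would remove this gap before estimating. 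The robust fix is to merge every image-free cell into a neighbouring cell that does meet $F(M)$: for such a $K_i$, quasi-surjectivity gives an image point $F(x)$ with $d_N(y_i,F(x))<\gamma$, and if $K_j$ is the cell containing $F(x)$ then $d_N(y_i,y_j)<2\gamma$, so $K_i\cup K_j\subset B(y_j,3\gamma)$. Thus the merged cells still have uniformly bounded diameter, and bounded geometry guarantees that at most $V_N(3\gamma)$ cells are ever merged into one, so the upper bounds of the previous paragraph survive with constants computed for radius $3\gamma$ rather than $\gamma$. (Alternatively one may re-run the construction of Proposition \ref{particaoqi} with centers drawn from the $\gamma$-net $F(M)$, so that each cell is organized around an image point.) After this adjustment every surviving cell meets $F(M)$, so $\mu^*(K_i)=\mu(E_i)\ge1$ and we may take $C_3=\frac{1}{2}$. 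Collecting $C_1=C_3=\frac{1}{2}$ together with the bounds $C_2$ and $C_4$ exhibits $F$ as a \mmqi, completing the proof.
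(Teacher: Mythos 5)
Your proof is correct, and it is in fact more careful than the paper's own argument, although the core estimates are the same. The paper verifies the compatibility condition vertex by vertex: it notes $1 \leq \mu'(\{x\}) \leq n$, uses Lemma \ref{quasibolamaior} to place $F^{-1}(\{x\})$ inside a ball of bounded radius, and then invokes the uniform degree bound to conclude $\mu(F^{-1}(\{x\})) < \frac{n^d-1}{n+1}$ --- these are exactly your volume-growth bounds $V_N(\gamma)$ and $V_M(D)$, so on the upper bounds the two arguments coincide. The genuine difference is the lower bound on $\mu^*$: the paper simply asserts $1 < \mu(F^{-1}(\{x\}))$, which fails when $F$ is not surjective (a vertex of $N$, or a cell of Proposition \ref{particaoqi}, may have empty preimage), whereas you isolate precisely this point as the delicate step and repair it by merging image-free cells into neighbouring cells that meet $F(M)$, at the cost of enlarging the constants from radius $\gamma$ to $3\gamma$. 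What your route buys is an argument that genuinely covers non-surjective quasi-isometries, which is the typical case; what the paper's vertex-by-vertex formulation buys is brevity and explicit constants. Two small caveats in your write-up: (i) the cells $K_i$ of Proposition \ref{particaoqi} need not be nonempty ``by construction'' (a later ball can be entirely swallowed by earlier ones), but such cells are discarded or absorbed harmlessly by your merging step, so your $C_1$ survives; (ii) your parenthetical alternative --- re-running Proposition \ref{particaoqi} with centers drawn from $F(M)$ --- does not by itself guarantee that every cell meets $F(M)$, since a cell need not contain its own center once the earlier balls are removed, so the merging argument should be regarded as the actual fix rather than an optional one.
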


\begin{proof}
It suffices to verify the measure compatibility condition for one vertex of $C$. Firstly, we note that $1 \leq \mu'(x) \leq n$, where $n$ is the maximum number of incident edges in $x$. Since $F$ is a quasi-isometry, then $F^{-1}(\{x\})$ is a bounded set in $M$ and, by Lemma 
\ref{quasibolamaior}, we have that there exists a ball of radius $A+B$ such that 
$F^{-1}(\{x\}) \subset B(x, A+B)$. Since $M$ is a uniformly bounded graph, we have that $1 < 
\mu(F^{-1}(\{x\})) < \dfrac{n^d -1}{n +1}$, where $d$ is the smallest integer greater than $A+B$. Thus, $F$ is a  \mmqi and the proof is complete.
\end{proof}

\begin{teo}
 The existence of phase transition  of a homogeneous Poisson  Boolean model on a Cayley graph does not depend on the choice of its generating set, for sufficiently large radius.
\end{teo}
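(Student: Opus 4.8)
The plan is to obtain the statement as a one-line consequence of the three results that immediately precede it, chained together: Theorem \ref{cayleygraph} produces a quasi-isometry between the two Cayley graphs, Theorem \ref{discretofunciona} promotes it (and its quasi-inverse) to an \mmqi, and Corollary \ref{corolimp2} then transfers the phase-transition property in both directions.

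In detail, I would fix the finitely generated group $H$ and two generating sets $S$ and $S'$. By Theorem \ref{cayleygraph} the graphs $C(H,S)$ and $C(H,S')$ are quasi-isometric; let $F: C(H,S) \to C(H,S')$ be a quasi-isometry with parameters $(\alpha,\beta,\gamma)$ and let $G$ be a quasi-inverse, which by the remark following the definition of quasi-isometry may be taken with the same parameters. The key observation is that a Cayley graph associated with a \emph{finite} symmetric generating set is a uniformly bounded graph: every vertex of $C(H,S)$ has degree $|S|$ (respectively $|S'|$ for $C(H,S')$), so both graphs are regular of bounded degree, and we equip each with the counting measure on $\mathcal{P}(H)$. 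Theorem \ref{discretofunciona} then applies verbatim and shows that $F$ is an \mmqi; applying it to $G$ as well shows that $F$ admits an \mmqi quasi-inverse.

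Having exhibited $F$ as an \mmqi admitting an \mmqi quasi-inverse, I would invoke Corollary \ref{corolimp2}: for sufficiently large radius, i.e. for $R > \alpha\beta + 2\alpha\gamma$, the existence of phase transition is invariant under such maps. Consequently the homogeneous Poisson Boolean model with such a radius exhibits phase transition on $C(H,S)$ if and only if a corresponding model exhibits phase transition on $C(H,S')$, which is exactly the asserted independence of the choice of generating set.

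The step requiring the most care is not computational but conceptual: one must verify that the hypotheses of Theorem \ref{discretofunciona} and Corollary \ref{corolimp2} are genuinely met, namely that $C(H,S)$ is uniformly bounded (immediate from finiteness of $S$) and that the \emph{quasi-inverse} is itself measure compatible (again immediate, since it is a quasi-isometry between the same uniformly bounded graphs). A secondary subtlety worth flagging is the meaning of ``sufficiently large radius'': the threshold $\alpha\beta + 2\alpha\gamma$ depends on the quasi-isometry parameters, hence on the pair $(S,S')$. Since the statement fixes the two generating sets before quantifying over radii, this is not an obstruction—for each fixed pair one obtains a finite threshold above which the equivalence holds—but it is the reason the conclusion is stated only for large radius rather than for all $R$.
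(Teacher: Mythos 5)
Your proposal is correct and follows essentially the same route as the paper: the paper's own proof also chains Theorem \ref{cayleygraph} (quasi-isometry of the two Cayley graphs), Theorem \ref{discretofunciona} (uniformly bounded graphs with counting measure make any such quasi-isometry measure compatible), and the main invariance result to conclude. You are in fact slightly more careful than the paper, which invokes Theorem \ref{teomaisimp} in one line, whereas you explicitly verify uniform boundedness, treat the quasi-inverse, and pass through Corollary \ref{corolimp2} to get the two-way independence; this is a presentational refinement, not a different argument.
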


\begin{proof}
 Let $H$ be a finitely generated countable group and let $S$ and $S'$ be two distinct generating set of $H$. Let $C(H,S)$ and $ C(H,S')$ be the Cayley graphs associated with the generating sets $S$ and $S'$ respectively. It follows from Theorem \ref{discretofunciona} that the Cayley graphs $G$ and $G'$ are mm-quasi-isometric. Hence, the result follows from Theorem \ref{teomaisimp}.
\end{proof}

\subsection{Riemannian Manifolds}
Before embarking on the implications of Theorem \ref{teomaisimp} in the context of Riemannian manifolds we give some definitions.

Let $M$ be a complete and connected Riemannian Manifold. A subset $S$ of $M$ 
is said to be \negrito{$\varepsilon$-separated} if $d(x,y)\geq\varepsilon>0$ for all distinct 
$x,y \in S$. 

A \negrito{discretization} of $M$ is a graph $\Gamma$ whose vertex set is 
given by  a $\varepsilon$-separated subset $S$ of $M$ for which there exists $\rho>0$ such that $M=\bigcup_{c\in S}B(c,\rho)$. We say that $\varepsilon$ is the \negrito{separation} and $\rho$ is the \negrito{discretizations cover radius}. 

The edge set of the graph $\Gamma$ is given by the family of all pair of  neighbors in $S$ where the set of neighbors of a vertex $c \in S$ is $N(c):=\{S\bigcap \cl{B(c,2\rho)}\setminus\{c\}\}$.

\begin{teo}[\cite{isochavel}]\label{qichavel}
 Let $M$ be a complete and connected Riemannian manifold with Ricci curvature bounded from bellow and let $S$ be a  $\varepsilon$-separated set. Then the associated graph $\Gamma$ is uniformly bounded and with the graph metric is quasi-isometric to $M$. 
\end{teo}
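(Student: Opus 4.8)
The plan is to show that the inclusion $\iota \colon (S,d_\Gamma) \to (M,d_M)$, sending each vertex $c \in S$ to the corresponding point of $M$, is a quasi-isometry, where $d_\Gamma$ denotes the graph metric on $\Gamma$ and $d_M$ the Riemannian distance. Quasi-surjectivity is immediate: since $M = \bigcup_{c \in S} B(c,\rho)$, every $x \in M$ lies within $\rho$ of some vertex, so $\iota$ is $\rho$-dense and we may take $\gamma = \rho$. It then remains to produce constants $\alpha \ge 1$ and $\beta \ge 0$ with $\frac{1}{\alpha}d_\Gamma(c,c') - \beta \le d_M(c,c') \le \alpha\, d_\Gamma(c,c') + \beta$ for all $c,c' \in S$, and to bound the vertex degrees uniformly. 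Throughout I write $n = \dim M$ and fix the normalization $\mathrm{Ric} \ge -(n-1)K$ with $K \ge 0$.

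First I would establish that $\Gamma$ is uniformly bounded, i.e.\ that the degrees are bounded by a constant depending only on $n,K,\varepsilon,\rho$. Fix $c \in S$ and let $s$ range over its neighbours, all of which lie in $\cl{B(c,2\rho)}$. Because $S$ is $\varepsilon$-separated, the balls $B(s,\varepsilon/2)$ are pairwise disjoint and are contained in $B(c,2\rho+\varepsilon/2)$. The Bishop--Gromov comparison theorem (available precisely because $\mathrm{Ric}$ is bounded from below) asserts that $r \mapsto \mathrm{vol}(B(p,r))/V_K(r)$ is non-increasing, where $V_K(r)$ is the volume of a ball of radius $r$ in the model space of constant curvature $-K$. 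Applying this at each centre $s$, and using $B(c,2\rho+\varepsilon/2) \subseteq B(s,4\rho+\varepsilon/2)$, yields a lower bound $\mathrm{vol}(B(s,\varepsilon/2)) \ge \mathrm{vol}(B(c,2\rho+\varepsilon/2))\, V_K(\varepsilon/2)/V_K(4\rho+\varepsilon/2)$ that is uniform over the neighbours. Summing the disjoint volumes inside $B(c,2\rho+\varepsilon/2)$ then forces $\deg(c) \le V_K(4\rho+\varepsilon/2)/V_K(\varepsilon/2)$, a bound independent of $c$.

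For the distance estimates, the upper bound is the easy direction: every edge of $\Gamma$ joins points at $d_M$-distance at most $2\rho$, so concatenating a shortest $\Gamma$-path of length $d_\Gamma(c,c')$ and applying the triangle inequality gives $d_M(c,c') \le 2\rho\, d_\Gamma(c,c')$. The lower bound $d_\Gamma(c,c') \le \alpha\, d_M(c,c') + \beta$ is the crux. Here I would take a minimizing geodesic from $c$ to $c'$ (which exists by completeness and Hopf--Rinow), subdivide it into short arcs, and select for each subdivision point a vertex of $S$ within the covering radius $\rho$, obtaining a chain $c = s_0, s_1, \dots, s_m = c'$ whose length $m$ should be proportional to $d_M(c,c')$. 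One then argues that consecutive selected vertices are genuinely adjacent in $\Gamma$ and that the number of steps grows at most linearly in $d_M(c,c')$.

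The main obstacle is precisely this last point. With covering radius $\rho$ and adjacency threshold $2\rho$, a naive subdivision only guarantees $d_M(s_{j-1},s_j) < 3\rho$, which need not be $\le 2\rho$; so one must choose the subdivision spacing and the representative vertices carefully, using the $\varepsilon$-separation both to prevent the selected vertices from clustering and to guarantee a definite, linearly accumulating progress toward $c'$ at each step, so that consecutive vertices stay within the threshold while $m$ is controlled linearly (rather than merely exponentially, as a crude volume count in a large ball would give). Once this chaining is in place, collecting the four ingredients, namely quasi-surjectivity, the uniform degree bound, and the two-sided comparison of $d_\Gamma$ and $d_M$, yields the quasi-isometry with parameters depending only on $n,K,\varepsilon,\rho$. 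This is the volume-comparison discretization argument, and the statement is the form recorded in \cite{isochavel}.
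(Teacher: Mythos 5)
You should first note that the paper contains no proof of this statement: it is quoted from \cite{isochavel}, so your proposal can only be judged against the standard discretization argument, not against anything in the paper's text. Three of your four ingredients are correct and are indeed the standard ones: quasi-surjectivity with $\gamma=\rho$ follows at once from the covering property; the uniform degree bound via the disjoint balls $B(s,\varepsilon/2)$ packed inside $B(c,2\rho+\varepsilon/2)$ together with Bishop--Gromov relative volume comparison is a correct packing argument; and $d_M(c,c')\le 2\rho\, d_\Gamma(c,c')$ follows from the triangle inequality along an edge path.

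However, the remaining inequality $d_\Gamma(c,c')\le \alpha\, d_M(c,c')+\beta$ --- which is the heart of the theorem and is also what shows $\Gamma$ is connected at all --- is exactly the step you leave open: you call it ``the main obstacle'' and ``the crux'' and then describe only what a proof should accomplish, not how. That is a genuine gap, and the repair you gesture at (choose the spacing so that each step makes ``definite, linearly accumulating progress'' while consecutive representatives remain adjacent) cannot work as described. With covering radius $\rho$ and adjacency threshold $2\rho$, a subdivision of spacing $h>0$ only gives $d(s_{j-1},s_j)<2\rho+h$, never $\le 2\rho$; and if instead you force adjacency by letting $s_j$ cover the \emph{exit point} of a unit-speed minimizing geodesic $\sigma$ from $c$ to $c'$ out of the closed ball $\cl{B(s_{j-1},\rho)}$ (so that $d(s_{j-1},s_j)<\rho+\rho$ holds automatically), then the progress made in a single step can be arbitrarily small, so ``definite progress'' is simply false. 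The correct completion abandons per-step progress: the exit-point chain just described can never revisit a vertex (for $k>j$ the point $\sigma(t_k)$ lies beyond the last time $\sigma$ meets $\cl{B(s_j,\rho)}$, hence $s_k\ne s_j$), all of its vertices are $\varepsilon$-separated points lying in the $\rho$-tube around $\sigma$, and covering that tube by balls of bounded radius centered at the unit-spaced points $\sigma(0),\sigma(1),\dots$ (rather than by one large ball, which, as you correctly note, yields only an exponential bound) shows via Bishop--Gromov that the tube contains at most $C(n,K,\varepsilon,\rho)\,\bigl(d_M(c,c')+2\bigr)$ points of $S$; hence the chain terminates after linearly many steps and $d_\Gamma(c,c')\le C\, d_M(c,c')+2C$. (Kanai's and Chavel's own setting sidesteps the issue by taking $S$ to be a \emph{maximal} $\varepsilon$-separated set, hence of covering radius $\varepsilon$, with adjacency threshold $3\varepsilon$, for which your naive subdivision does work.) Without some such argument your proposal establishes neither the connectivity of $\Gamma$ nor the required bound, so it does not prove the theorem.
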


\begin{teo}[\cite{isochavel}]\label{chavelbaixo}
 Let $M$ be a complete Riemannian manifold, with Ricci curvature bounded from below and assume that there exist constants $r_0$ and  $V_0$ such that 
\[
\mu(B(x,r_0)) \geq V_0,
\]
for any $x \in M$. Then, for all $r >0$, there exists $C(r)>0$ such that 
$$
\mu(B(x,r)) \geq C(r)
$$
for any $x \in M$.
\end{teo}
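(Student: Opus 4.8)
The plan is to derive both bounds from the Bishop--Gromov relative volume comparison theorem, which is exactly where the hypothesis on the Ricci curvature enters. Write $n = \dim M$ and let $(n-1)K$ be a lower bound for the Ricci curvature; since replacing $K$ by any smaller value only weakens the hypothesis, I may assume $K \le 0$, so that the model space form of constant curvature $K$ is $\bbR^n$ or a hyperbolic space and its ball volume $V_K(s)$ is finite, strictly positive and strictly increasing for every $s>0$. The comparison theorem then asserts that for each fixed $x \in M$ the ratio $s \mapsto \mu(B(x,s))/V_K(s)$ is non-increasing on $(0,\infty)$.

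First I would dispose of the easy regime $r \ge r_0$. Here $B(x,r_0) \subset B(x,r)$, so monotonicity of $\mu$ under inclusion gives $\mu(B(x,r)) \ge \mu(B(x,r_0)) \ge V_0$, and it suffices to set $C(r) = V_0$.

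The substantive case is $0 < r < r_0$, where monotonicity of the measure in the radius runs the wrong way and one genuinely needs the comparison theorem to transport the lower bound at radius $r_0$ down to radius $r$. Applying the non-increasing property of the ratio between the radii $r$ and $r_0$ yields
\[
\frac{\mu(B(x,r))}{V_K(r)} \ge \frac{\mu(B(x,r_0))}{V_K(r_0)} \ge \frac{V_0}{V_K(r_0)},
\]
whence $\mu(B(x,r)) \ge \frac{V_0}{V_K(r_0)}\,V_K(r)$. Since the right-hand side is strictly positive and independent of $x$, I may set $C(r) = V_0\, V_K(r)/V_K(r_0)$ in this range, which completes the argument.

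The only real obstacle is the comparison theorem itself; everything else is bookkeeping with inclusions of balls. The crucial point to get right is that the lower Ricci bound is used through Bishop--Gromov in the \emph{decreasing} monotonicity of the volume ratio, which is precisely what lets a lower bound at the larger scale $r_0$ propagate to all smaller scales uniformly in $x$. The reduction to $K \le 0$ is merely a convenience ensuring that $V_K(r)$ stays finite and positive for every $r>0$, so that the constant $C(r)$ is well defined and strictly positive.
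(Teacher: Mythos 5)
Your proof is correct. Note, however, that the paper itself offers no proof of this statement: it is imported verbatim from \cite{isochavel} as a known result and used as a black box (notably inside the proof of Theorem \ref{variedadefunciona}), so there is no internal argument to compare against. Your Bishop--Gromov argument --- reducing to $K \le 0$, handling $r \ge r_0$ by inclusion of balls, and for $0 < r < r_0$ using the non-increasing ratio $s \mapsto \mu(B(x,s))/V_K(s)$ to propagate the lower bound from scale $r_0$ down to scale $r$ uniformly in $x$ --- is the standard proof of this fact and is exactly how the cited reference establishes it; it is also consistent with the paper's own use of Bishop--Gromov for the complementary upper bound in Theorem \ref{variedadefunciona}.
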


\begin{teo}\label{variedadefunciona}
Let $M$ be a Riemannian manifold with Ricci curvature bounded from below which admits a discretization. Let $\mu$ be the volume measure on $\left(M, \mathcal{B}\left(M\right)\right)$ and let $\Gamma$ be the graph associated to its discretization. Assume the existence of nonnegative constants $r_0$ and $V_0$ such that, for all $x \in M, \mu(B(x,r_0)) \geq V_0$.   Then there exists a \mmqi $F: M \to \Gamma$.
\end{teo}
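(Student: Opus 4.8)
The plan is to exhibit a concrete quasi-isometry $F : M \to \Gamma$ and then verify the four measure-compatibility inequalities in the definition of an mm-quasi-isometry, the volume estimates coming from the Ricci lower bound through Theorem \ref{chavelbaixo} and the Bishop--Gromov comparison.

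First I would take $F : M \to \Gamma$ to be the nearest-vertex map, sending each $x \in M$ to a vertex $c$ of the $\varepsilon$-separated set $S$ minimizing $d(x,c)$ (ties broken by a fixed well-ordering of the countable set $S$, which keeps the cells Borel). Since $M = \bigcup_{c \in S} B(c,\rho)$, the chosen vertex satisfies $d(x,F(x)) < \rho$, and $F$ fixes every vertex; thus $F$ is a quasi-inverse of the inclusion $\Gamma \hookrightarrow M$, which is a quasi-isometry by Theorem \ref{qichavel}, and is therefore itself a quasi-isometry. Let $(\alpha,\beta,\gamma)$ be its parameters, let $n$ be the maximal degree of the uniformly bounded graph $\Gamma$, and let $\mu'$ be the counting measure on the vertex set. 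Applying Proposition \ref{particaoqi} gives the covering $\{K_i\}$ of $\Gamma$, and I discard its empty members so that each $K_i$ is nonempty.

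The bounds on $\mu'(K_i)$ are then immediate: $K_i \neq \emptyset$ gives $\mu'(K_i) \ge 1$, while $K_i \subseteq B(y_i,\gamma)$ together with the bounded degree forces $|B(y_i,\gamma)| \le \sum_{k=0}^{\lceil\gamma\rceil} n^{k}$, so $C_1 < \mu'(K_i) < C_2$ with $C_1 = \frac{1}{2}$ and $C_2 = 1 + \sum_{k=0}^{\lceil\gamma\rceil} n^{k}$. The heart of the proof is the estimate of $\mu^*(K_i) = \mu(F^{-1}(K_i)) = \sum_{c \in K_i} \mu(F^{-1}(\{c\}))$, which needs a uniform two-sided bound on the volume of each cell $F^{-1}(\{c\})$. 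For the lower bound, if $d(x,c) < \varepsilon/2$ then $\varepsilon$-separation gives $d(x,c') > \varepsilon/2 > d(x,c)$ for every other vertex $c'$, so $F(x) = c$; hence $B(c,\varepsilon/2) \subseteq F^{-1}(\{c\})$, and Theorem \ref{chavelbaixo} yields $\mu(F^{-1}(\{c\})) \ge \mu(B(c,\varepsilon/2)) \ge C(\varepsilon/2) > 0$ uniformly in $c$. For the upper bound, $F^{-1}(\{c\}) \subseteq \overline{B}(c,\rho)$, and since the Ricci curvature is bounded from below the Bishop--Gromov comparison provides a constant $D(\rho)$, depending only on $\rho$, the dimension and the curvature bound, with $\mu(\overline{B}(c,\rho)) \le D(\rho)$ for all $c$. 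Combining these with $1 \le |K_i| \le C_2$ gives $C(\varepsilon/2) \le \mu^*(K_i) \le C_2\, D(\rho)$, so $C_3 < \mu^*(K_i) < C_4$ holds with $C_3 = \frac{1}{2} C(\varepsilon/2)$ and $C_4 = 1 + C_2 D(\rho)$. With all four inequalities established, $F$ is an mm-quasi-isometry.

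I expect the main obstacle to be precisely this uniform two-sided volume estimate on the Voronoi-type cells $F^{-1}(\{c\})$: the lower bound is exactly where the standing hypothesis $\mu(B(x,r_0)) \ge V_0$ enters, through Theorem \ref{chavelbaixo}, while the upper bound rests on the Bishop--Gromov comparison, which is not proved in the text and must be invoked as a standard consequence of the Ricci lower bound. Everything else --- that the nearest-vertex map is a quasi-isometry and the counting-measure bounds on $\mu'(K_i)$ --- is routine.
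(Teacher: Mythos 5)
Your proposal is correct and follows essentially the same route as the paper: the quasi-isometry comes from the discretization (Theorem \ref{qichavel}), the counting-measure bounds on $\mu'(K_i)$ come from the uniformly bounded degree exactly as in Theorem \ref{discretofunciona}, the lower bound on $\mu^*(K_i)$ comes from an $\varepsilon/2$-ball sitting inside each preimage cell together with Theorem \ref{chavelbaixo}, and the upper bound comes from the Bishop--Gromov comparison. The only difference is presentational: you take $F$ to be the explicit nearest-vertex (Voronoi) map, which turns the paper's asserted inclusion $B\left(x_i,\frac{\varepsilon}{2}\right) \subset K_i$ into a directly verifiable statement about the cells $F^{-1}(\{c\})$, and you also state the Bishop--Gromov bound in the correct generality (comparison with the model space of the curvature lower bound rather than with Euclidean volume).
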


\begin{proof}
The existence of a \qi  $F:M \to \Gamma$ is guaranteed by the previous theorem. Therefore, we only need to prove that this \mmqi  is measure compatible.

\noindent It follows from the Bishop-Gromov's Theorem that $\mu(B(x,C)) \leq \mu_E(B(x,C))$, where $\mu_E$ is the Euclidean volume of the ball which depends only on the radius of the ball. Let $C_2 = \mu_E(B(x,C))$ be the volume of the ball of radius $C$. Then  $\mu(B(x,C)) \leq C_2$. Since $K_i \subset B(x,C)$ for some $x$, we have $\mu(K_i) \leq C_2$.
For the lower bound just note that, by the graph construction, the points of 
$S$ are sufficiently distant so that the sets $K_i$ have not empty interior. In 
this way, we have a ball 
$B\left(x_i, \frac{\varepsilon}{2}\right) \subset K_i$ by \cite{clarkson}. By Theorem \ref{chavelbaixo}, we have $\mu(B(x,r)) \geq C(r)$, for all $x\in M$. Then  $C_1 = C(\frac{\varepsilon}{2}) \leq \mu(B(x,\frac{\varepsilon}{2})) \leq \mu(K_i)$, and therefore $C_1 \leq \mu(K_i) \leq C_2$. Since $\Gamma$ is a uniformly bounded graph, the rest of the proof is analogous to the proof of Theorem \ref{discretofunciona}. 
\end{proof}

Therefore, $M$ is quasi-isometric to any of its discretizations and any two discretizations of $M$ are quasi-isometric between them. As a consequence of Theorems \ref{teomaisimp} and the previous theorem we have

 \begin{teo}
 Let $M$ be a Riemannian  manifold with Ricci curvature bounded from below which admits a discretization. Let $\mu$ be the volume measure on $(M,\mathcal{B}(M))$ and let $\Gamma$ be the graph associated with its discretization. Assume also the existence of non zero positive constants $r_0$ and $V_0$ such that $\mu(B(x,r_0)) \geq V_0$.
  Then, for sufficiently large radius, the Boolean model of percolation in a Riemannian manifold $M$ is subcritical (supercritical) if and only if the Boolean model of percolation on $\Gamma$ is subcritical (supercritical).
\end{teo}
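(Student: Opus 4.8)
The plan is to read this statement as the conjunction of Theorem~\ref{variedadefunciona}, which builds the geometric bridge between $M$ and $\Gamma$, and Theorem~\ref{teomaisimp}, which transports percolative behaviour across that bridge. Under precisely the hypotheses assumed here --- Ricci curvature bounded below, the volume lower bound $\mu(B(x,r_0)) \geq V_0$, and the existence of a discretization --- Theorem~\ref{variedadefunciona} already furnishes an \mmqi $F\colon M \to \Gamma$. The forward half of each equivalence is then immediate: for $R > \alpha\beta + 2\alpha\gamma$, if $\bmodel$ is subcritical in $M$ then Theorem~\ref{teomaisimp}(a) produces $\lambda', R'$ making the induced model on $\Gamma$ subcritical, and Theorem~\ref{teomaisimp}(b) does the same in the supercritical case.

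For the converse implications I would pass to the quasi-inverse. Since quasi-isometry is an equivalence relation, $F$ admits a quasi-inverse $G\colon \Gamma \to M$, and by \cite{shchur} we may take $G$ with the same parameters $(\alpha,\beta,\gamma)$. Before feeding $G$ into Theorem~\ref{teomaisimp} one must check that $G$ is measure compatible, i.e.\ that for the partition $\{K_i\}$ of $M$ associated with $G$ both $\mu'(K_i)$ (here the volume of $K_i$) and $\mu^*(K_i)$ (here the number of vertices of $\Gamma$ sent into $K_i$) lie between strictly positive constants. This is the mirror image of Theorems~\ref{discretofunciona} and~\ref{variedadefunciona}: the uniform boundedness of $\Gamma$ makes the vertex count of $G^{-1}(K_i)$ at least one and at most some fixed number, while the two-sided volume control on $M$ --- Bishop--Gromov from above and Theorem~\ref{chavelbaixo} from below, via a ball $B(x,\varepsilon/2) \subset K_i$ --- bounds $\mathrm{vol}(K_i)$ away from $0$ and $\infty$. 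Hence $G$ is an \mmqi as well, so $F$ admits an mm-quasi-inverse.

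Combining the two directions gives the equivalence: applying Theorem~\ref{teomaisimp} to $F$ yields ``$M$ subcritical (supercritical) $\Rightarrow \Gamma$ subcritical (supercritical)'', and applying it to $G$ yields the reverse implication, so a subcritical (supercritical) phase exists on one side exactly when it exists on the other; equivalently, one may simply invoke Corollary~\ref{corolimp2} once the mm-quasi-inverse is available. The step I expect to be the main obstacle is precisely this verification that the quasi-inverse $G$ is again measure compatible, since the mm-condition is not formally symmetric and must be re-established with the roles of the counting and volume measures interchanged. A secondary bookkeeping point --- ensuring one ``sufficiently large radius'' serves both $F$ and $G$ --- is harmless: taking $G$ with the parameters of $F$ as in \cite{shchur} makes the two radius thresholds coincide, and the freedom left in the induced intensity $\lambda'$ costs nothing because the percolation probability is monotone in $\lambda$.
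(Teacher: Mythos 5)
Your high-level route is exactly the paper's: the paper offers no separate proof of this theorem, presenting it as an immediate consequence of Theorem \ref{variedadefunciona} (which supplies the \mmqi $F: M \to \Gamma$) and Theorem \ref{teomaisimp}. You go beyond the paper in observing that these two results only give the half ``$M$ subcritical (supercritical) $\Rightarrow$ $\Gamma$ subcritical (supercritical)'', and that the converse requires a quasi-inverse $G: \Gamma \to M$ which is itself measure compatible (equivalently, an appeal to Corollary \ref{corolimp2}). That diagnosis is correct, and it is precisely the point the paper leaves unsaid.

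However, your verification that $G$ is an \mmqi does not work as written. The covering of $M$ produced by Proposition \ref{particaoqi} consists of sets $K_i = B(y_i,\gamma) \setminus \bigcup_{j<i} B(y_j,\gamma)$ with the $y_i$ arbitrary points of $M$; such a set can be a thin sliver which contains no point of the vertex set $S = G(\Gamma)$ at all, so that $\mu^*(K_i) = 0$, and which contains no ball $B(x,\varepsilon/2)$, so that its volume $\mu'(K_i)$ can be arbitrarily small. Note also that ``uniform boundedness of $\Gamma$'' has no bearing on either lower bound: bounded degree, combined with quasi-distance preservation, yields only the \emph{upper} bound on the vertex count of $G^{-1}(K_i)$. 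The mirror-image appeal to Theorem \ref{variedadefunciona} does not transfer, because there the lower bounds concern the preimages $E_i = F^{-1}(K_i)$ under the nearest-vertex map, which are Voronoi-type cells and therefore automatically contain balls of radius $\varepsilon/2$; the raw ball-difference sets $K_i$ have no such structure. The repair uses the freedom, acknowledged in the paper, of choosing the covering: take $\{K_i\}$ to be a disjointification of the Voronoi cells of the $\varepsilon$-separated net $S \subset M$. Then $B(v_i,\varepsilon/2) \subset K_i \subset B(v_i,\rho)$, so Theorem \ref{chavelbaixo} bounds $\mu'(K_i)$ from below and Bishop--Gromov bounds it from above, while $\mu^*(K_i) = 1$ since each cell contains exactly one point of $S$. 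With this choice of covering your argument, including the radius bookkeeping via \cite{shchur}, closes.
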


\paragraph{Group Actions}

Let $M$ be a given complete Riemannian Manifold and let $\Gamma$ be a finitely generated isometry subgroup of $M$ acting freely and properly discontinuously. Namely, for any $p \in M$, there exists a neighborhood $U$ of $p$ such that $U \cap g(U) =\emptyset$ for all $g \neq e$. 

Now we recall the \v{S}varc-Milnor's Theorem.

\begin{teo}[Proposition 8.19 in \cite{bridson}]
Let $M$ be a complete Riemannian manifold and let $\Gamma$ be a finitely generated isometry subgroup of $M$ acting freely and properly discontinuously such that the quotient  $M/\Gamma$ is a compact manifold. Then $M$ and $\Gamma$ are quasi-isometric.
\end{teo}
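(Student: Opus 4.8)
The plan is to prove this instance of the \v{S}varc--Milnor lemma by transporting $\Gamma$ into $M$ via an orbit map and verifying the \qi axioms directly. First I would record the geometric input: a complete Riemannian manifold is, by Hopf--Rinow, a proper geodesic metric space, so any two points are joined by a length-minimizing geodesic and closed balls are compact. Fix a basepoint $x_0 \in M$, write $d$ for the Riemannian distance and $d_S$ for the word metric on $\Gamma$ associated with the generating set produced below. Since $\Gamma$ acts by isometries and $M/\Gamma$ is compact, the images $\pi(B(x_0,r))$ under the quotient map form an increasing open cover of $M/\Gamma$, so for some $R>0$ we have $M = \bigcup_{g\in\Gamma} g\,\bar{B}(x_0,R)$; equivalently the orbit $\Gamma x_0$ is $R$-dense in $M$. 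This at once gives quasi-surjectivity of the orbit map $\Phi\colon \Gamma \to M$, $\Phi(g)=gx_0$, with constant $R$.

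Next I would exhibit a finite generating set. Put
\[
S := \{\, g \in \Gamma \setminus \{e\} : g\,\bar{B}(x_0,2R) \cap \bar{B}(x_0,2R) \neq \emptyset \,\}.
\]
Because the action is properly discontinuous and $\bar{B}(x_0,2R)$ is compact, only finitely many $g$ can translate this ball to meet itself, so $S$ is finite; it is clearly symmetric. To see that $S$ generates $\Gamma$ and to obtain the lower \qi bound, I would run the standard chain argument. Given $g\in\Gamma$, join $x_0$ to $gx_0$ by a minimizing geodesic of length $L=d(x_0,gx_0)$, subdivide it into $n=\lceil L/R\rceil$ subarcs of length at most $R$ with endpoints $x_0=p_0,p_1,\dots,p_n=gx_0$, and for each $p_i$ choose $g_i\in\Gamma$ (with $g_0=e$, $g_n=g$) such that $p_i\in g_i\bar{B}(x_0,R)$. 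Since $d(p_{i-1},p_i)\le R$ and $d(g_{i-1}^{-1}p_{i-1},x_0)\le R$, isometry-invariance gives $d(g_{i-1}^{-1}p_i,x_0)\le 2R$, so $g_{i-1}^{-1}g_i$ maps $g_i^{-1}p_i\in\bar{B}(x_0,2R)$ into $\bar{B}(x_0,2R)$; hence $g_{i-1}^{-1}g_i\in S$. Therefore $g=(g_0^{-1}g_1)\cdots(g_{n-1}^{-1}g_n)$ is a word of length $n$ in $S$, which shows $\Gamma=\langle S\rangle$ is finitely generated and yields $d_S(e,g)\le n\le L/R+1$.

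The complementary inequality is immediate: setting $\lambda := \max_{s\in S} d(x_0,sx_0)$, a minimal word $g=s_1\cdots s_m$ with $m=d_S(e,g)$ satisfies, by the triangle inequality and $\Gamma$-invariance of $d$, the bound $d(x_0,gx_0)\le\sum_{i=1}^m d(x_0,s_ix_0)\le\lambda\,d_S(e,g)$. Using left-invariance of $d_S$ and $d(gx_0,hx_0)=d(x_0,g^{-1}hx_0)$, both estimates translate to arbitrary pairs, giving for all $g,h\in\Gamma$
\[
R\,d_S(g,h)-R \;\le\; d(\Phi(g),\Phi(h)) \;\le\; \lambda\,d_S(g,h).
\]
With $\alpha=\max\{\lambda,1/R\}$ and $\beta=R$ this is exactly the quasi-distance-preserving condition, and together with the $R$-density it exhibits $\Phi$ as a \qi, so $M$ and $\Gamma$ are quasi-isometric.

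The main obstacle is the chain step: one must calibrate the radii so that compactness together with proper discontinuity forces every connecting element $g_{i-1}^{-1}g_i$ into the \emph{finite} set $S$, while the number of steps stays linearly controlled by $d(x_0,gx_0)$; the rest is bookkeeping with the triangle inequality and isometry-invariance. I note that freeness of the action plays no role in the \qi statement itself---proper discontinuity and cocompactness suffice---and only guarantees that $\Phi$ is injective, so that $\Gamma$ is faithfully represented by its orbit.
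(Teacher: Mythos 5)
Your proof is correct and is essentially the standard orbit-map (\v{S}varc--Milnor) chain argument, which is exactly the proof in the source the paper cites for this statement (the paper itself gives no proof, quoting Proposition 8.19 of \cite{bridson} verbatim). The only caveat worth noting is that your finiteness-of-$S$ step invokes proper discontinuity in the ``a compact set meets only finitely many of its translates'' sense, whereas the paper's stated definition is the weaker wandering condition ($U \cap g(U) = \emptyset$ for all $g \neq e$); for isometric actions on a complete Riemannian manifold, which is a proper metric space, the two notions coincide (via an Arzel\`a--Ascoli argument), so your argument goes through.
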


Since the quotient $M/\Gamma$ is a compact manifold, a standard compactness argument shows that  $M$ is a manifold with Ricci curvature bounded from below for which there exist strictly positive constants $r_0$ and $V_0$ such that $\mu'(B(x,r_0)) \geq V_0$, see \cite{manfredao} for more details. Hence, the proof of the following theorem follows directly from the \v{S}varc-Milnor's Theorem and Theorems \ref{teomaisimp} and \ref{variedadefunciona}. 

\begin{teo}
Let $M$ be a complete Riemannian manifold and let $\Gamma$ be a finitely generated isometry subgroup of $M$ acting freely and properly discontinuously such that the quotient $M/\Gamma$ is a compact manifold. Then, the Boolean model in $M$ is subcritical (supercritical) if and only if the Boolean model is subcritical (supercritical) on $\Gamma$, for sufficiently large radius. 
\end{teo}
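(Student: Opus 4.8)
The plan is to assemble the final theorem purely from machinery already developed earlier in the excerpt, with essentially no new probabilistic content. The statement concerns a complete Riemannian manifold $M$ with a finitely generated isometry subgroup $\Gamma$ acting freely and properly discontinuously with compact quotient $M/\Gamma$; I want to conclude that the Boolean model is subcritical (supercritical) on $M$ if and only if it is subcritical (supercritical) on $\Gamma$, for sufficiently large radius. The key observation is that $\Gamma$ is itself a finitely generated group, so it carries a Cayley graph structure, and the \v{S}varc-Milnor Theorem provides a \qi between $M$ and $\Gamma$. The entire argument therefore reduces to upgrading this \qi to a \mmqi and then invoking Theorem \ref{teomaisimp} in both directions.

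First I would record the \qi $F: M \to \Gamma$ supplied by the \v{S}varc-Milnor Theorem. Next, using the compactness of the quotient $M/\Gamma$ together with the remark immediately preceding the statement, I would note that $M$ is a manifold with Ricci curvature bounded from below and that there exist strictly positive constants $r_0$ and $V_0$ with $\mu(B(x,r_0)) \geq V_0$ for every $x \in M$; this is exactly the hypothesis needed to apply the manifold results. At this point $M$ satisfies all the assumptions of Theorem \ref{variedadefunciona}, so the \qi between $M$ and its discretization is in fact a \mmqi. Since $\Gamma$, viewed through its Cayley graph, is a uniformly bounded graph quasi-isometric to a discretization of $M$, I would combine this with Theorem \ref{discretofunciona} (which removes the measure-compatibility hypothesis for uniformly bounded graphs) to conclude that the \qi $F: M \to \Gamma$ is measure compatible, i.e. a genuine \mmqi.

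Once $F$ is known to be a \mmqi, the two implications follow directly from Theorem \ref{teomaisimp}. If the Boolean model is subcritical (respectively supercritical) on $M$ with radius $R > \alpha\beta + 2\alpha\gamma$, then part (a) (respectively part (b)) of Theorem \ref{teomaisimp} produces an intensity $\lambda'$ and radius $R'$ for which the induced model on $\Gamma$ is subcritical (supercritical). For the converse direction I would use that quasi-isometry is an equivalence relation (so $F$ admits a \mmqi quasi-inverse $G: \Gamma \to M$, as noted after the definition of \qi and made precise by Corollary \ref{corolimp2}) and apply Theorem \ref{teomaisimp} to $G$. Phrasing the conclusion via Corollary \ref{corolimp2}, which already packages the invariance of subcritical and supercritical phases under mm-quasi-isometries admitting a mm-quasi-inverse, makes both directions simultaneous.

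The one genuine point requiring care — and the step I expect to be the main obstacle — is verifying that the \v{S}varc-Milnor \qi is measure compatible, i.e. that the volume measure on $M$ and the counting measure on $\Gamma$ are comparable on the covering sets $\{K_i\}$. This is not automatic from the \qi alone; it rests on the lower volume bound $\mu(B(x,r_0)) \geq V_0$ (guaranteed here only through the compactness of $M/\Gamma$) feeding into Theorem \ref{chavelbaixo}, and on the Bishop-Gromov upper bound, exactly as in the proof of Theorem \ref{variedadefunciona}. Everything else is bookkeeping: collecting the hypotheses of Theorem \ref{variedadefunciona}, identifying $\Gamma$'s Cayley graph as a uniformly bounded graph, and citing Theorem \ref{teomaisimp} together with Corollary \ref{corolimp2}. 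I would therefore keep the exposition short, stating that the proof follows directly from the \v{S}varc-Milnor Theorem and Theorems \ref{teomaisimp} and \ref{variedadefunciona}, precisely as the excerpt anticipates.
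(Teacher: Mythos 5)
Your proposal is correct and follows essentially the same route as the paper, whose proof is exactly the one-line assembly you anticipate: compactness of $M/\Gamma$ yields the Ricci lower bound and the uniform volume lower bound, the \v{S}varc-Milnor Theorem gives the quasi-isometry, Theorem \ref{variedadefunciona} upgrades it to a \mmqi, and Theorem \ref{teomaisimp} transfers the phases. Your extra care in routing through the discretization and Theorem \ref{discretofunciona}, and in invoking the quasi-inverse (Corollary \ref{corolimp2}) for the converse direction, only makes explicit what the paper leaves implicit.
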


\paragraph{Tilings}

 A \textbf{tiling} of a manifold $M$ is a  family $\{K_1, \dots, K_n\}$ of compact sets where $K_i \subset M$  and a family $\phi_i^j$ of isometries, where $1 \leq i \leq n$, such that
$$
\bbR^n = \bigcup_{i,j} \phi_i^j(K_j) \mbox{ and } 
\mu(\phi_i^j(K_i)\bigcap\phi_s^l(K_s)) = 0.
$$

We may associate a graph $\Gamma$ with each tiling as follows. For each $K_i$, choose a point $p_i \in \inte{K_i}$. Let the vertex set be the set of all points $\phi_i^j(p_i)$. By convenience, we will denote the points that are image of an isometry by
$p_i^j = \phi_i^j(p_i)$ and $p_i^0 = p_i$. 
Now say that a pair $\{p_i^j , p_k^l\}$ is an edge 
if and only if $K_i^j \cap K_k^l \neq \emptyset$. Then, in a similar way to Theorem \ref{qichavel}, we have

\begin{corol}

Let $M$ be a Riemannian manifold, $\mathcal{K}$ be a tiling of $M$ and $\Gamma$ be the graph given by this tiling. Then $M$ is  quasi-isometric to $\Gamma$.
\end{corol}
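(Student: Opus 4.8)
The plan is to recognise the vertex set $V=\{p_i^j\}$ of $\Gamma$ as the vertex set of a discretization of $M$ in the sense defined above, and then to transport the quasi-isometry $M\simeq\Gamma_{\mathrm{disc}}$ furnished by Theorem \ref{qichavel} to the tiling graph $\Gamma$. Everything rests on the fact that a tiling uses only finitely many tile shapes $K_1,\dots,K_n$ up to isometry, so that every isometric copy $K_i^j=\phi_i^j(K_i)$ carries the same uniform geometric bounds. As in Theorem \ref{qichavel}, I assume $M$ complete and connected with Ricci curvature bounded from below.

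First I would verify that $V$ is an $\varepsilon$-separated $\rho$-net. Set $\delta:=\min_i d(p_i,\partial K_i)>0$ and $\rho:=\max_i\operatorname{diam}(K_i)<\infty$, both finite and positive since there are finitely many compact tiles and each $p_i\in\inte{K_i}$. As isometries preserve distances, $B(p_i^j,\delta)\subset\inte{K_i^j}$; because distinct tiles have disjoint interiors, every other vertex $p_k^l$ lies outside this ball, so $d(p_i^j,p_k^l)\geq\delta$ and $V$ is $\delta$-separated. Since the tiles cover $M$, any $x\in M$ lies in some $K_i^j$ and hence $d(x,p_i^j)\leq\operatorname{diam}(K_i^j)\leq\rho$, so $M=\bigcup_{c\in V}B(c,\rho)$. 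Taking $\rho$ for the discretization cover radius, $V$ underlies a discretization $\Gamma_{\mathrm{disc}}$, and Theorem \ref{qichavel} makes $\Gamma_{\mathrm{disc}}$ quasi-isometric to $M$. Finally, $K_i^j\cap K_k^l\neq\emptyset$ forces $d(p_i^j,p_k^l)\leq 2\rho$, so every edge of $\Gamma$ is an edge of $\Gamma_{\mathrm{disc}}$; thus $d_{\Gamma_{\mathrm{disc}}}\leq d_{\Gamma}$ on $V$.

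What remains — and this is the main obstacle — is the reverse comparison $d_{\Gamma}\leq C\,d_{\Gamma_{\mathrm{disc}}}$, i.e.\ a uniform bound showing that two vertices at $M$-distance at most $2\rho$ are joined by at most $C$ tiling edges. I would take a minimizing geodesic $\sigma$ (available by completeness) between two such vertices $v,w$ and let $\mathcal T$ be the family of tiles it meets. A packing estimate bounds $|\mathcal T|$ uniformly: every tile of $\mathcal T$ lies in $B(v,3\rho)$ and has volume at least $v_0:=\min_i\mu(K_i)>0$, while Bishop--Gromov — using the lower Ricci bound exactly as in Theorems \ref{qichavel} and \ref{variedadefunciona} — gives a uniform upper bound on $\mu(B(v,3\rho))$; as the tiles have disjoint interiors, $|\mathcal T|\leq C$ independently of $v,w$. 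The tiles of $\mathcal T$ moreover form a connected subgraph of $\Gamma$: the sets $\sigma^{-1}(K)$, $K\in\mathcal T$, are closed and cover the connected interval on which $\sigma$ is defined, so the intersection graph of this finite closed cover is connected, and an intersection of two such tiles is exactly a $\Gamma$-edge. Since $v$ and $w$ are centres of tiles in $\mathcal T$ and $|\mathcal T|\leq C$, we get $d_{\Gamma}(v,w)\leq C$. Hence the two graph metrics on $V$ are bi-Lipschitz equivalent, $\Gamma$ is quasi-isometric to $\Gamma_{\mathrm{disc}}$, and composing with Theorem \ref{qichavel} shows $\Gamma$ is quasi-isometric to $M$. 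The points requiring care are the uniform tile-count (where the curvature hypothesis enters) and the elementary but essential fact that a connected space covered by finitely many closed sets has a connected intersection nerve.
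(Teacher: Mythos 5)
Your argument is correct, and it is worth saying up front that the paper itself offers \emph{no} proof of this corollary: it is introduced only with the phrase ``in a similar way to Theorem \ref{qichavel}, we have'' and no argument follows. So your proposal is not a variant of the paper's proof but a genuine filling-in of the gap, and it realizes exactly what that phrase gestures at: you show the tile centres $\{p_i^j\}$ form a $\delta$-separated $\rho$-net (uniformly, because a tiling uses only finitely many compact shapes, so $\delta=\min_i d(p_i,\partial K_i)>0$ and $\rho=\max_i \operatorname{diam} K_i<\infty$), apply Theorem \ref{qichavel} to the resulting discretization graph $\Gamma_{\mathrm{disc}}$, and then prove the tiling graph $\Gamma$ is bi-Lipschitz to $\Gamma_{\mathrm{disc}}$ on their common vertex set. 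The substantive content that the paper's one-line remark hides is precisely your second comparison: the easy inclusion (tiling edges are $\Gamma_{\mathrm{disc}}$-edges since adjacent tiles have centres within $2\rho$) together with the reverse bound $d_{\Gamma}\leq C\,d_{\Gamma_{\mathrm{disc}}}$, which you obtain from a Bishop--Gromov packing estimate on the number of tiles a geodesic of length $2\rho$ can meet, plus the fact that a finite closed cover of a connected interval has connected nerve. Both of those auxiliary facts are true and your use of them is sound.

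Two small points of care. First, the corollary as printed assumes only ``Riemannian manifold,'' but your packing step needs a uniform upper bound on $\mu\left(B(x,3\rho)\right)$, hence the lower Ricci bound (and completeness, for Hopf--Rinow and for Theorem \ref{qichavel} itself). You state these as inherited hypotheses, and that is the right reading: without some such assumption the number of disjoint tiles meeting a short geodesic can be unbounded and the bi-Lipschitz comparison fails, so the bare statement is not provable by any route of this kind; the hypotheses are simply the standing ones of the section (compare Theorems \ref{qichavel} and \ref{variedadefunciona}). Second, a trivial repair in the packing estimate: the per-tile lower bound should be $v_0=\min_i\mu\left(\inte{K_i}\right)>0$ rather than $\min_i\mu(K_i)$, since the tiling condition only forces overlaps of measure zero, hence disjoint \emph{interiors}, and it is the interior volumes that add up inside $B(v,3\rho)$; as each $\inte{K_i}$ is a nonempty open set, this constant is still strictly positive and the count $|\mathcal{T}|\leq \mu\left(B(v,3\rho)\right)/v_0$ goes through unchanged.
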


\noindent As a consequence of Theorems \ref{discretofunciona} and \ref{teomaisimp}, we get the following result.
 
\begin{teo}
Let $M$ be a Riemannian manifold, $\mathcal{K}$ be a tiling of $M$ and $\Gamma$ the graph given by this tiling. Then, for sufficiently large radius, the Boolean model of percolation is subcritical on $\Gamma$  if and only if the Boolean model is subcritical in $M$. Moreover, the Boolean model of percolation is supercritical on $\Gamma$ if and only if the Boolean model is supercritical in $M$.
\end{teo}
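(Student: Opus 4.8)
The plan is to obtain the equivalence directly from Corollary \ref{corolimp2}, once the quasi-isometry supplied by the preceding corollary has been upgraded to an \mmqi. First I would invoke that corollary to fix a \qi $F\colon M \to \Gamma$ with parameters $(\alpha,\beta,\gamma)$, where $M$ carries its Riemannian volume measure $\mu$ and $\Gamma$ carries the counting measure $\mu'$. By Corollary \ref{corolimp2}, the existence of a subcritical (respectively supercritical) phase is invariant under mm-quasi-isometries admitting an mm-quasi-inverse; so the whole statement reduces to verifying that $F$ is measure compatible, i.e. that there are strictly positive constants $C_1,\dots,C_4$ with $C_1<\mu'(K_i)<C_2$ and $C_3<\mu^*(K_i)<C_4$ for every member $K_i$ of the covering of $\Gamma$ produced by Proposition \ref{particaoqi}.

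The core of the argument is this verification, and the tiling hypothesis furnishes the needed bounds essentially for free, in the same spirit as Theorems \ref{discretofunciona} and \ref{variedadefunciona}. For the $\Gamma$ side, the graph is uniformly bounded: its tiles are isometric copies of the finite family $K_1,\dots,K_n$ and the tiling is locally finite, so each vertex has degree at most some $\Delta$, exactly as in Theorem \ref{qichavel}. Since each $K_i$ is contained in a ball of radius $\gamma$, the number $\mu'(K_i)$ of vertices it contains is bounded above by a constant depending only on $\Delta$ and $\gamma$, and the lower bound is obtained just as in the proof of Theorem \ref{discretofunciona}. For the $M$ side, $\mu^*(K_i)=\mu(F^{-1}(K_i))=\mu(E_i)$. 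Arguing as in Lemma \ref{quasibolamaior}, quasi-surjectivity together with the quasi-distance-preserving inequality confines $E_i$ to a single ball $B(p,\rho)$ of radius $\rho=\alpha(2\gamma+\beta)$ independent of $i$. Because $M$ is partitioned into tiles that are isometric copies of $K_1,\dots,K_n$ and the Riemannian volume is isometry-invariant, every tile has volume in the finite set $\{\mu(K_1),\dots,\mu(K_n)\}$ of strictly positive finite numbers; $E_i$ contains at least one full tile, the preimage of a vertex lying in $K_i$, and meets only a uniformly bounded number of them, which yields the two-sided bound $C_3\le\mu(E_i)\le C_4$.

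With measure compatibility established, $F$ is an \mmqi; since all four bounds are two-sided and depend only on the tiling and the parameters $(\alpha,\beta,\gamma)$, the quasi-inverse $G\colon\Gamma\to M$ is measure compatible as well, so $F$ admits an mm-quasi-inverse and Corollary \ref{corolimp2} applies. This delivers, for every radius $R>\alpha\beta+2\alpha\gamma$, the transfer of subcriticality and of supercriticality in both directions between $M$ and $\Gamma$, which is the claimed equivalence. The step I expect to be the main obstacle is the uniform control underlying the $M$-side bounds: one must guarantee that the fixed-radius set $E_i$ meets only a bounded number of tiles, uniformly in $i$, so that the finiteness of the tile-volumes actually produces constants $C_3,C_4$ independent of $i$. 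This is precisely where local finiteness of the tiling and the diameter control coming from Lemma \ref{quasibolamaior} enter, and it is the only place where a genuine geometric, rather than purely formal, estimate is required.
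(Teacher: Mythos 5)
Your proposal takes essentially the same route as the paper, whose own ``proof'' is a one-line citation: the tiling corollary supplies the quasi-isometry $F\colon M\to\Gamma$, measure compatibility is obtained by arguing as in Theorem \ref{discretofunciona} (graph side, counting measure) together with the tile-volume bounds (manifold side, exactly as in Theorem \ref{variedadefunciona}), and the transfer then follows from Theorem \ref{teomaisimp}, of which Corollary \ref{corolimp2} is an immediate consequence. Your filled-in verification of the bounds on $\mu'(K_i)$ and $\mu^*(K_i)=\mu(E_i)$ — including the two-sided tile-volume estimate and the uniform bound on the number of tiles met by $E_i$ — is precisely the detail the paper leaves implicit, and the points you gloss (nonemptiness of the $K_i$, measure compatibility of the quasi-inverse needed for the ``if and only if'') are glossed at exactly the same level in the paper itself.
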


We emphasize that the previous theorem also applies  to aperiodic tiling, i.e. tiling that does not have translational symmetry such as the Penrose tilings.

%
%

 \paragraph{Nilpotent Lie Groups}

\begin{defi}
Let $G$ be a locally compact group. A discrete subgroup $H$ of $G$ is called a
\negrito{lattice} in $G$ if the quotient $G/H$ has a finite invariant measure.
\end{defi}

Now we recall  some classical results about finite generated nilpotent groups and about lattices in nilpotent Lie groups.

\begin{teo}\cite{rag}
 Let $N$ be a simply connected and nilpotent Lie group. Denote by $\mathfrak{n}$ its Lie algebra. Then $N$ has a lattice if and only if $\mathfrak{n}$ has a basis with rational structure constants.
\end{teo}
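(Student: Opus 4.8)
The statement is Mal'cev's theorem, so the plan is to prove both implications through the exponential coordinates of $N$, following the approach of \cite{rag}. The essential structural fact is that, for a simply connected nilpotent Lie group, $\exp\colon \mathfrak{n}\to N$ is a diffeomorphism and the product is expressed by the Baker--Campbell--Hausdorff (BCH) series
\[
\exp(X)\exp(Y)=\exp\Bigl(X+Y+\tfrac{1}{2}[X,Y]+\cdots\Bigr),
\]
which terminates after finitely many terms because $\mathfrak{n}$ is nilpotent and is therefore a polynomial map whose coefficients are fixed rationals times iterated brackets of $X$ and $Y$. The whole argument consists in transporting the group law of $N$ to a polynomial law on $\mathfrak{n}$ with denominators that can be controlled.

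For sufficiency ($\Leftarrow$) I would start from a basis in which the structure constants are rational and refine it to a strong Mal'cev basis $X_1,\dots,X_n$ adapted to the lower central series $\mathfrak{n}=\mathfrak{n}_1\supset\mathfrak{n}_2\supset\cdots$, so that every $\mathfrak{n}_k$ is the span of a tail of the basis and each $[X_i,X_j]$ is a rational combination of basis vectors lying deeper in the series. Rescaling the $X_i$ by a common large integer, I would arrange that the $\mathbb{Z}$-span $L=\mathbb{Z}X_1+\cdots+\mathbb{Z}X_n$ is closed under the bracket; since the BCH coefficients are rational and all higher brackets fall into deeper ideals, a further clearing of denominators makes $L$ closed under the BCH product itself. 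Then $\Gamma=\exp(L)$ is a subgroup, and in Mal'cev coordinates of the second kind, $(t_1,\dots,t_n)\mapsto\exp(t_1X_1)\cdots\exp(t_nX_n)$, it corresponds exactly to the integer points; this polynomial diffeomorphism shows at once that $\Gamma$ is discrete and that $N/\Gamma$ is compact, hence of finite invariant volume, so $\Gamma$ is a lattice.

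For necessity ($\Rightarrow$) I would assume $\Gamma$ is a lattice and build a rational form of $\mathfrak{n}$ by induction on the nilpotency class, descending through the center $Z=Z(N)$. One checks that $\Gamma\cap Z$ is a lattice in the vector group $Z$ (so $\log(\Gamma\cap Z)$ is a full lattice in $\mathfrak{z}$, giving rational structure there) and that the image of $\Gamma$ in $N/Z$ is a lattice, to which the inductive hypothesis applies. To glue these data I would take generators $\gamma_i$ coming from a Mal'cev basis through the isolated central series of $\Gamma$, set $Y_i=\log\gamma_i$, and prove that the $\mathbb{Q}$-span of $\{Y_i\}$ is a Lie subalgebra over $\mathbb{Q}$: each group commutator of the $\gamma_i$ again lies in $\Gamma$, and under BCH its logarithm equals $[Y_i,Y_j]$ plus brackets of higher order that are already rational by induction, so matching integer coordinates forces every $[Y_i,Y_j]$ to be a rational combination of the $Y_k$. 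The basis $\{Y_i\}$ then has rational structure constants.

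The hard part will be the necessity direction, and within it the gluing step, because the BCH correspondence between group commutators and Lie brackets is nonlinear. One must induct carefully along the central series so that the denominators introduced at each level stay uniformly bounded and so that the resulting rational span is genuinely bracket-closed and not merely a real-spanning set. Controlling these denominators --- equivalently, verifying that $\log\Gamma$ generates a finitely generated subring of $\mathfrak{n}$ of full rank --- is exactly the point where the finite generation and torsion-freeness of lattices in nilpotent groups, together with Mal'cev rigidity, are indispensable.
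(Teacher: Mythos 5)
The paper does not prove this statement at all: it is Mal'cev's criterion, quoted as a known result with the citation \cite{rag} (Raghunathan), so there is no internal proof to compare your attempt against. Your plan follows exactly the classical route of that reference --- exponential coordinates, the terminating Baker--Campbell--Hausdorff series, Mal'cev bases adapted to the central series, and induction through the center --- so the strategy itself is the correct and standard one.

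As a proof, however, what you have written is a plan whose load-bearing lemmas are asserted rather than established, and these are precisely the technical heart of the theorem. In the necessity direction, the two facts you dispose of with ``one checks'' --- that $\Gamma \cap Z(N)$ is a lattice in the vector group $Z(N)$, and that the image of $\Gamma$ in $N/Z(N)$ is again a lattice --- are themselves substantial theorems: they require knowing that a lattice in a simply connected nilpotent group is finitely generated, torsion-free and cocompact (note the paper defines ``lattice'' only by finite invariant covolume, so cocompactness must be proved before any quotient argument makes sense), and that $\Gamma Z(N)$ is closed. Without these the induction never starts. The ``gluing'' step you flag yourself is also genuinely open in your text: since $\log\Gamma$ is in general not closed under addition or bracket, the real issue is showing that its $\mathbb{Q}$-span is \emph{finite-dimensional over} $\mathbb{Q}$ (a true $\mathbb{Q}$-form), not merely that it spans $\mathfrak{n}$ over $\mathbb{R}$; your ``matching integer coordinates'' remark is the right idea but is exactly the part that needs careful execution. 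In the sufficiency direction your rescaling argument correctly yields that $L$ is closed under the BCH product, hence that $\exp(L)$ is a discrete subgroup; but the further claim that $\exp(L)$ ``corresponds exactly to the integer points'' in coordinates of the second kind --- which is where cocompactness comes from --- is an independent lemma requiring its own (inductive) proof and does not follow ``at once'' from BCH-closure. In short: right approach, same as the cited source, but the decisive steps are named, not proved.
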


\begin{teo}\cite{gromov}[Gromov Theorem] A finitely generated group have polynomial growth if and only if it has a nilpotent subgroup with finite index. 
\end{teo}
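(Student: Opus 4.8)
The plan is to treat the two implications separately, since they are of very different character. The easy direction asserts that if $G$ contains a nilpotent subgroup $H$ of finite index, then $G$ has polynomial growth. First I would recall that the growth function is invariant, up to the usual equivalence, under passage to a finite-index subgroup, so it suffices to bound the growth of $H$ itself. For a finitely generated nilpotent group the Bass--Guivarc'h formula gives the exact degree of polynomial growth in terms of the ranks of the successive quotients of the lower central series $\gamma_1 \supseteq \gamma_2 \supseteq \cdots$; concretely the degree equals $\sum_{i \geq 1} i\,\rank{\gamma_i/\gamma_{i+1}}$. I would obtain the upper bound by induction on the nilpotency class, expressing elements through a Mal'cev-type basis and counting words of bounded length, which is manifestly polynomial.

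The hard direction---that polynomial growth forces virtual nilpotency---is Gromov's theorem proper and the genuine obstacle. The strategy I would follow is Gromov's original one. I would rescale the word metric on the Cayley graph, forming the sequence of pointed metric spaces $(G, \tfrac{1}{n}d, e)$, and use the polynomial growth hypothesis to verify the uniform doubling property needed for Gromov's precompactness criterion, so that a subsequence converges in the pointed Gromov--Hausdorff topology to a limit space $Y$. One then checks that $Y$ is a locally compact, connected, locally connected, finite-dimensional, homogeneous metric space carrying a transitive isometric action induced from $G$.

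The crux is to identify the isometry group of $Y$, and this is where I expect the main difficulty. By the Montgomery--Zippin--Gleason solution of Hilbert's fifth problem, a locally compact, finite-dimensional, locally connected topological group with no small subgroups is a Lie group; applying this to $\operatorname{Isom}(Y)$ produces a homomorphism from a finite-index subgroup of $G$ into a connected Lie group with infinite image. From this one extracts either a surjection onto $\bbZ$ or a nontrivial finite-dimensional linear representation, and then an induction on the degree of polynomial growth---which strictly decreases on passing to the kernel of a virtual surjection onto $\bbZ$---reduces to groups of smaller growth and ultimately yields a nilpotent subgroup of finite index. The delicate points will be verifying that the Gromov--Hausdorff limit is regular enough to invoke the Hilbert-fifth-problem machinery, and arranging the induction so that genuine virtual \emph{nilpotency}, rather than some weaker finiteness condition, is recovered at the base case.
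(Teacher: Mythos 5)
The paper offers no proof of this statement to compare against: it is Gromov's polynomial growth theorem, quoted verbatim from \cite{gromov} as an external ingredient (the paper only uses it, together with the Bass--Guivarc'h formula, to deduce phase transition for lattices in nilpotent Lie groups). So the only question is whether your blind attempt is a sound proof in its own right.

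Your outline follows Gromov's original argument and is correct as a strategy, but as written it is a sketch whose unproved steps are themselves major theorems, and a few ingredients needed to close the argument are missing entirely. Concretely: (i) polynomial growth does \emph{not} give a uniform doubling bound at all scales, only along a subsequence of scales, so the precompactness step needs this subsequence extraction built in from the start (this is exactly the point that led van den Dries and Wilkie to redo the argument with ultralimits); (ii) your induction pivots on the claim that the degree of growth strictly decreases on passing to the kernel of a virtual surjection onto $\bbZ$, but this presupposes that the kernel is finitely generated --- a nontrivial lemma which itself uses polynomial growth and must be proved, otherwise ``growth of the kernel'' is not even defined in the finitely generated category you are inducting over; (iii) the case analysis after applying Montgomery--Zippin requires Jordan's theorem (to handle homomorphisms to the Lie group with finite image), the Tits alternative (to exclude free subgroups when you obtain a linear representation, since free groups have exponential growth), and finally the Milnor--Wolf theorem (finitely generated solvable groups of polynomial growth are virtually nilpotent) to convert what the induction actually produces --- virtual solvability --- into virtual nilpotency. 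You flag the last point as ``delicate'' but do not supply the mechanism; without Milnor--Wolf the induction terminates short of the stated conclusion. None of this makes the approach wrong --- it is the canonical proof --- but the attempt is an annotated roadmap rather than a proof.
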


\begin{teo}\cite{bass}\cite{guivarc}[Bass-Guivarc'h Formula]
The order of the polynomial growth of $G$ is 
 $$d(G) = \sum_{k\geq1} k \rank{G_k/G_{k+1}}$$
\end{teo}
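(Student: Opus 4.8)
The plan is to choose coordinates adapted to the lower central series and to reduce the growth count to a lattice-point count. First I would recall that the degree of polynomial growth is unchanged under passing to a finite-index subgroup or a finite quotient; since the torsion elements of a finitely generated nilpotent group form a finite normal subgroup, I may assume $G$ is torsion-free. Write the lower central series $G = G_1 \supseteq G_2 \supseteq \cdots \supseteq G_{c+1} = \{e\}$, with $G_{k+1} = [G, G_k]$, and choose an adapted central series $G = H_1 \supseteq H_2 \supseteq \cdots \supseteq H_{r+1} = \{e\}$ refining it, each factor $H_i/H_{i+1}$ being infinite cyclic. Let $a_i$ generate $H_i$ modulo $H_{i+1}$, and assign to $a_i$ the weight $w_i = k$, where $k$ is the index with $H_i \subseteq G_k$ but $H_i \not\subseteq G_{k+1}$. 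By construction the number of indices $i$ with $w_i = k$ equals $\rank{G_k/G_{k+1}}$, so that $\sum_i w_i = \sum_{k\geq1} k\,\rank{G_k/G_{k+1}}$. Every $g \in G$ has a unique normal form $g = a_1^{e_1}\cdots a_r^{e_r}$ with $e_i \in \bbZ$ (Mal'cev coordinates), so counting group elements amounts to counting admissible exponent tuples.

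The crux is a two-sided estimate relating word length to these coordinates, namely
$$
\norma{a_1^{e_1}\cdots a_r^{e_r}}_S \asymp \sum_{i=1}^{r} \abs{e_i}^{1/w_i},
$$
with constants depending only on $G$ and $S$. For the upper bound I would exhibit short words directly: an element of weight $k$, built as a $k$-fold iterated commutator and raised to a power $m$, can be reached by a word of length of order $m^{1/k}$, because identities such as $[\,a^n, b^n\,] = [\,a,b\,]^{n^2}$ modulo higher-weight terms let one amplify $[\,a,b\,]^m$ using roughly $\sqrt{m}$ letters at weight two, and inductively $m^{1/k}$ letters at weight $k$; multiplying the layers together yields the bound. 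For the lower bound I would argue by induction on the nilpotency class: projecting to the abelianization $G/G_2$ controls the weight-one exponents, and then quotienting by $G_2$ and invoking the inductive hypothesis on the deeper layers controls the higher-weight exponents, using that a word of length $n$ can create weight-$k$ commutators whose coordinates are at most of order $n^k$.

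Granting the length estimate, the growth function becomes, up to constants, the number of integer tuples $(e_1,\ldots,e_r)$ with $\sum_i \abs{e_i}^{1/w_i} \leq n$. Since each constraint $\abs{e_i}^{1/w_i} \leq n$ is equivalent to $\abs{e_i} \leq n^{w_i}$, this count is comparable to $\prod_{i=1}^r n^{w_i} = n^{\sum_i w_i}$, and a standard volume/lattice-point comparison shows the coupled constraint alters this only by a bounded multiplicative factor. Hence the growth function is $\asymp n^{\sum_i w_i}$, and the order of polynomial growth is
$$
d(G) = \sum_{i=1}^r w_i = \sum_{k \geq 1} k\,\rank{G_k/G_{k+1}},
$$
as claimed.

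I expect the main obstacle to be the lower bound in the length estimate. The upper bound is a direct construction and the lattice count is routine, but showing that a short word cannot produce large high-weight coordinates requires genuine commutator calculus: one must verify that in a word of length $n$ the weight-$k$ part of the resulting element has coordinates of size at most $O(n^k)$, which is cleanest via induction on the class together with filtration estimates for iterated commutators. Some care is also needed at the outset to justify that neither the torsion reduction nor the particular choice of adapted series affects the degree of growth.
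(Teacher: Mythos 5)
The paper offers no proof of this statement: it is quoted as a classical theorem, with the argument deferred to the cited works of Bass and Guivarc'h. Your outline is, in substance, exactly the argument of those sources --- Mal'cev coordinates adapted to the lower central series, the two-sided length estimate $\norma{a_1^{e_1}\cdots a_r^{e_r}}_S \asymp \sum_{i}\abs{e_i}^{1/w_i}$, and the lattice-point count giving growth of order $n^{\sum_i w_i}$ --- and you correctly isolate where the real work lies, namely the lower bound that a word of length $n$ cannot produce weight-$k$ coordinates larger than $O(n^k)$. One technical point needs repair before the plan is watertight: even after passing to a torsion-free $G$, the lower central quotients $G_k/G_{k+1}$ can still have torsion, so a refining central series with all factors infinite cyclic need not exist as you describe it. The standard fix is to refine the isolated (rational) lower central series instead, whose factors are free abelian of rank exactly $\rank{G_k/G_{k+1}}$; the weight count, and hence the identity $\sum_i w_i = \sum_{k\geq 1} k\,\rank{G_k/G_{k+1}}$, is unchanged, and the length estimates go through verbatim. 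You should also record, in the initial reduction, that quotienting by the finite torsion subgroup and passing to finite-index subgroups preserves not only the growth degree but also the ranks of the lower central factors, since both sides of the formula must be compared after the reduction. With these points added, your proposal is a faithful reconstruction of the Bass--Guivarc'h proof.
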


\begin{teo}[Corollary 7.18 of \cite{yuval}]\label{coryuval}
 If $G$ is a Cayley graph of a group $\Gamma$ with, at most, polynomial growth, then either $\Gamma$ is isomorphic to $\bbZ$ or there is phase transition in $G$.
\end{teo}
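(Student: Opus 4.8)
The plan is to reduce the asserted dichotomy to a statement about the degree of polynomial growth of $\Gamma$ and then dispose of the two resulting cases by entirely different means. First I would apply Gromov's Theorem: since $G$ is a Cayley graph of $\Gamma$ and $\Gamma$ has at most polynomial growth, $\Gamma$ contains a nilpotent subgroup of finite index. On such a group the Bass--Guivarc'h formula shows that the order of growth is a well-defined nonnegative integer, $D = \sum_{k \geq 1} k\,\rank{\Gamma_k/\Gamma_{k+1}}$, where $\{\Gamma_k\}$ is the lower central series of a nilpotent finite-index subgroup (growth degree being a quasi-isometry invariant, this does not depend on the choices). The whole argument then splits according to whether $D \leq 1$ or $D \geq 2$: the first case forces $\Gamma$ to be, up to finite index, $\bbZ$ and yields no phase transition, while the second case produces a genuine one.

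Consider first the exceptional case. If $D = 0$ then $\Gamma$ is finite and $G$ is a finite graph, so the percolation question is degenerate; if $D = 1$ then $\Gamma$ has exactly linear growth, hence is two-ended and therefore virtually $\bbZ$, which is the alternative recorded in the statement as ``$\Gamma$ isomorphic to $\bbZ$''. On a graph quasi-isometric to $\bbZ$ every vertex is separated from each of its two ends by cutsets of uniformly bounded size, and one can choose infinitely many pairwise disjoint such cutsets; for any $p<1$, almost surely infinitely many of them contain a closed edge, so every cluster is finite and $p_c(G)=1$. Thus there is no phase transition precisely in the $\bbZ$ branch.

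It remains to prove that $D \geq 2$ forces $p_c(G)<1$, and this is the crux. The route I would take is geometric: a Cayley graph of a group of polynomial growth of degree $D$ satisfies a $D$-dimensional isoperimetric inequality, $|\partial S| \geq c\,|S|^{(D-1)/D}$ for every finite vertex set $S$ (Coulhon--Saloff-Coste). When $D \geq 2$ the exponent $(D-1)/D$ is strictly positive, which is exactly the input a Peierls-type argument needs: if the open cluster of a fixed vertex $o$ is finite, it is surrounded by a cutset of closed edges, and the isoperimetric bound guarantees that any cutset enclosing $n$ vertices carries at least $c\,n^{(D-1)/D}\to\infty$ closed edges, each present with probability $1-p$. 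The hard part will be the combinatorial control of the resulting sum: one must bound the number of candidate cutsets enclosing $n$ vertices by an exponential $C^{|\partial S|}$ so that the entropy of contours is beaten by the decay $(1-p)^{c\,n^{(D-1)/D}}$, and on a general Cayley graph one cannot appeal to planar duality as on a lattice. Once such a counting estimate is in place, for $p$ close enough to $1$ the probability that $o$'s cluster is finite is strictly less than $1$, giving $p_c(G)<1$. An alternative that avoids the contour count is structural: using Gromov's Theorem and the Mal'cev theory of nilpotent groups one shows that growth degree $\geq 2$ forces $\Gamma$ to contain a copy of $\bbZ^2$ as a subgroup, and then one transfers $p_c<1$ from $\bbZ^2$ by a monotone comparison of percolation on the associated subgraph; combined with the quasi-isometry invariance established in Corollary \ref{corolimp2}, this also makes the conclusion independent of the generating set defining $G$.
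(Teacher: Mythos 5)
The paper does not prove this statement at all: it is imported verbatim (as Corollary 7.18 of \cite{yuval}, the Lyons--Peres monograph) and used as a black box in the application to nilpotent Lie groups, so there is no internal proof to compare yours against. Judged on its own merits, your outline reproduces the standard skeleton of the known argument --- Gromov's theorem to obtain a nilpotent finite-index subgroup, Bass--Guivarc'h to define the growth degree $D$, and the case $D \leq 1$ handled by infinitely many disjoint bounded cutsets plus Borel--Cantelli (this part is correct, and it also quietly corrects the paper's imprecise phrasing: the true alternative is ``a finite extension of $\bbZ$'', i.e.\ virtually $\bbZ$, not ``isomorphic to $\bbZ$'') --- but it does not actually prove the crux, namely that $D \geq 2$ forces $p_c(G) < 1$.

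Both routes you offer for that crux have a genuine hole. In the Peierls route you explicitly defer the key estimate: one must show that the number of minimal cutsets of size $n$ separating a fixed vertex from infinity grows at most exponentially in $n$. On a general Cayley graph this is not a routine combinatorial lemma; it is a theorem for finitely presented groups (Babson--Benjamini), later extended by Tim\'ar, and nothing in your sketch supplies it once planar duality is unavailable, so as written the argument is conditional on exactly the hard step. In the structural route, the group-theoretic claim is fine (a torsion-free nilpotent group of Hirsch length at least $2$ contains $\bbZ^2$: take a nontrivial central element together with an element of infinite order modulo the center), but the ``monotone comparison of percolation on the associated subgraph'' fails as stated: the subgraph of $G$ induced by the subgroup $\bbZ^2 \leq \Gamma$ can be edgeless, because the generators of that $\bbZ^2$ need not lie in the generating set defining $G$. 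One must simulate each $\bbZ^2$-generator by a bounded-length word in $G$, which turns Bernoulli percolation on $G$ into a finite-range dependent percolation on the square lattice, and then invoke either a domination theorem (Liggett--Schonmann--Stacey) or a Peierls bound valid for $k$-dependent fields. These missing steps can be filled in from the literature, but they are precisely where the difficulty of the cited theorem lives, so the proposal as it stands is an outline rather than a proof.
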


\begin{teo}

Let $G$ be a nilpotent Lie group of dimension at least $2$ such that its Lie algebra has rational structure constants. Then, for sufficiently large radius, the Boolean model in $G$ has phase transition.
\end{teo}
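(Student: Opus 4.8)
The plan is to transfer the problem from $G$ to the Cayley graph of a lattice sitting inside it, where phase transition is already available from Theorem \ref{coryuval}, and then to carry phase transition back to $G$ through the invariance machinery of Theorem \ref{teomaisimp}. The entire argument is to be routed through uniformly bounded graphs, where Theorem \ref{discretofunciona} turns every quasi-isometry into a \mmqi automatically.

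First I would endow $G$ with a left-invariant Riemannian metric, turning $G$ into a homogeneous Riemannian manifold. Homogeneity immediately yields that the Ricci curvature is bounded from below and that there are constants $r_0, V_0 > 0$, independent of the point, with $\mu(B(x,r_0)) \geq V_0$ for all $x \in G$ (the volume of a ball of fixed radius does not depend on its center). Being complete, $G$ also admits a discretization, obtained from any maximal $\varepsilon$-separated net. Hence $G$ satisfies the hypotheses of Theorem \ref{variedadefunciona}: there is a uniformly bounded discretization graph $\Gamma$ and a \mmqi $F : G \to \Gamma$.

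Next, since $\mathfrak{n}$ has rational structure constants, the theorem of \cite{rag} furnishes a lattice $H$ in $G$ (taking $G$ simply connected, as in that theorem). A lattice in a nilpotent Lie group is finitely generated and cocompact, so the \v{S}varc-Milnor theorem gives that $G$ is quasi-isometric to $H$ with a word metric, that is, to a Cayley graph $C(H,S)$. As a finitely generated nilpotent group, $H$ has polynomial growth, with degree given by the Bass-Guivarc'h formula; moreover $H \not\cong \bbZ$, for otherwise $G$ would be quasi-isometric to $\bbZ$ and hence to $\bbR$, forcing $\dim G = 1$ against the hypothesis $\dim G \geq 2$. Therefore Theorem \ref{coryuval} applies to $C(H,S)$ and produces phase transition there.

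Finally I would close the loop. The graphs $\Gamma$ and $C(H,S)$ are both uniformly bounded and both quasi-isometric to $G$, hence quasi-isometric to one another; by Theorem \ref{discretofunciona} this quasi-isometry is a \mmqi, and so is its quasi-inverse. Thus $G$, $\Gamma$ and $C(H,S)$ are pairwise mm-quasi-isometric through maps admitting \mmqi quasi-inverses, and Corollary \ref{corolimp2} lets phase transition propagate from $C(H,S)$ to $\Gamma$ and then, via $F$, back to $G$, for sufficiently large radius. The step I expect to demand the most care is precisely this last one: the \v{S}varc-Milnor and discretization theorems only output quasi-isometries, whereas Theorem \ref{teomaisimp} requires measure compatibility. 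The resolution is structural rather than computational — by passing through the uniformly bounded graphs $\Gamma$ and $C(H,S)$, Theorem \ref{discretofunciona} supplies the mm-upgrade for free, while Theorem \ref{variedadefunciona} provides the single genuinely geometric \mmqi between $G$ and its discretization.
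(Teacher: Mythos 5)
Your proposal is correct and follows essentially the same route as the paper: lattice existence from the rational structure constants, quasi-isometry of $G$ with the lattice via \v{S}varc-Milnor/discretization, phase transition on the lattice's Cayley graph via Gromov's theorem, the Bass--Guivarc'h formula and Theorem \ref{coryuval}, and then transfer back to $G$ through Theorems \ref{teomaisimp} and \ref{variedadefunciona}. Your explicit detour through the discretization graph $\Gamma$ and Theorem \ref{discretofunciona} is in fact slightly more careful than the paper's own wording, which applies Theorem \ref{variedadefunciona} with the lattice's Cayley graph playing the role of $\Gamma$ even though that graph is not literally a discretization of $G$.
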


\begin{proof}
Since $G$ is a Lie group such that its associated Lie algebra has rational structure constants, then there exists a lattice $H \subset G$. Therefore, we get from Theorem \ref{qichavel} that $G$ is quasi-isometric to $H$. It follows from Gromov's Theorem that $H$ has polynomial growth. From the Bass-Givarc'h identity we get that $H$ has growth of at least $2$. Since $d(\bbZ)=1$, it follows from Theorem \ref{coryuval} that there is phase transition in $H$. Since $H$ is mm-quasi-isometric to $G$, we have that there is phase transition in $G$ as a consequence of Theorems \ref{teomaisimp} and \ref{variedadefunciona}. 
\end{proof}

\subsection{Gromov Spaces with bounded growth at some scale}

Now we prove the subcriticality for the Boolean model of percolation in Gromov spaces of bounded growth at some scale using  a quasi-isometry between a Gromov spaces  and  a convex subset of the hyperbolic space. For more details about this quasi-isometry see \cite{bonkemb}. 

\begin{defi} 
 We say that a metric space $M$ is  a \textbf{geodesic metric space} if any two points in $M$ are the extremes of a minimizing geodesic segment. 
A \textbf{geodesic triangle} is  the set  consisting of three different points together with the pairwise-connecting geodesic lines.  Given $ \delta > 0 $, we say that a geodesic triangle is $\delta $-slim if any one of its sides is contained in $\delta$-neighborhood  of the union of  the other two sides. A 
geodesic metric is  a Gromov Space or $ \delta $-hyperbolic if 
all its triangles are $ \delta $-slim for some $ \delta> 0 $. 
\end{defi} 

\begin{figure}[h]
 \begin{center}
 \includegraphics[width=3.6cm]{./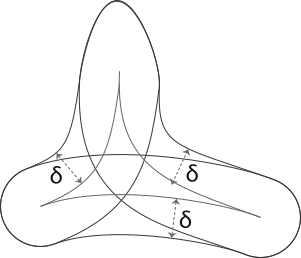}
  \caption{$\delta$-slim Triangle.}
\end{center}

\end{figure}

As examples of Gromov  space we cite trees, Cayley graphs associated with the hyperbolic groups (this family includes almost all Cayley graphs, see \cite{gromov1987hyperbolic} for a precise meaning of this assertion) and Riemannian manifolds with negative scalar curvature.

A metric space has \negrito{bounded growth at some scale} if there exist constants 
 $R>r>0$ e $N \in \bbN$ such that every ball of radius $R$ 
can be covered by at most $N$ balls of radius $r$.

Note that, for graphs, bounded degree implies bounded growth at some scale, hence the Cayley graphs of every finitely generated hyperbolic group is a Gromov space with  bounded growth at some scale.

\begin{teo}[\cite{bonkemb}]\label{schramm}
Let $M$  be a Gromov space with bounded growth at some scale.
Then there exists $n$ such that $M$ is quasi-isometric to a convex subset of  $\bbH^n$. 
\end{teo}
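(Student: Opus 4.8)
This is the Bonk--Schramm embedding theorem, so the plan is to follow their strategy: pass to the boundary at infinity, embed the boundary into a Euclidean space realized as the boundary of a hyperbolic space, and then lift the boundary embedding back to the interior. First I would fix a basepoint $o\in M$ and, for a small parameter $\eps>0$, equip the boundary at infinity $\partial_\infty M$ with a visual metric $\rho_\eps$ satisfying $\rho_\eps(\xi,\eta)\asymp e^{-\eps(\xi\mid\eta)_o}$, where $(\xi\mid\eta)_o$ is the Gromov product; such a metric exists for all sufficiently small $\eps$ because $M$ is a Gromov space. The essential translation I would then carry out is that \emph{bounded growth at some scale} for $M$ forces $(\partial_\infty M,\rho_\eps)$ to be a doubling metric space, and in particular to have finite Assouad dimension: counting balls of radius $r$ inside a ball of radius $R$ in $M$ corresponds, through the visual correspondence, to counting boundary balls at the matching scale.

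Next I would invoke Assouad's embedding theorem, which states that a doubling metric space $(Z,\rho)$ admits, for each snowflake exponent $\theta\in(0,1)$, a bi-Lipschitz embedding of the snowflaked space $(Z,\rho^\theta)$ into some Euclidean space $\bbR^{N}$. Applying this to $Z=(\partial_\infty M,\rho_\eps)$ and observing that $\rho_\eps^\theta$ is again a visual metric of $\partial_\infty M$ (now with parameter $\theta\eps$), I would match $\theta\eps$ to the visual parameter for which $\partial_\infty\bbH^{N+1}\cong\bbR^{N}\cup\{\infty\}$ carries a visual metric bi-Lipschitz to the Euclidean one. This produces a bi-Lipschitz, hence power-quasisymmetric, embedding of $\partial_\infty M$ into $\partial_\infty\bbH^{N+1}$.

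I would then lift this boundary embedding to the interior via the hyperbolic filling (hyperbolic cone) construction: for any bounded metric space $Z$ one builds a Gromov hyperbolic graph $\mathrm{Con}(Z)$ whose visual boundary is bi-Lipschitz to $Z$, and after arranging $M$ to be visual one has $M$ quasi-isometric to $\mathrm{Con}(\partial_\infty M)$. Since a power-quasisymmetric embedding of boundaries extends to a quasi-isometric embedding of the corresponding fillings, Step~2 yields a quasi-isometric embedding of $\mathrm{Con}(\partial_\infty M)\simeq M$ into $\bbH^{N+1}$. Finally I would replace the image by the convex hull in $\bbH^{N+1}$ of the embedded boundary set: this convex subset is quasi-isometric to the filling, and the embedding already lands within bounded distance of it, which gives the desired quasi-isometry of $M$ with a convex subset of $\bbH^{n}$ for $n=N+1$.

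The main obstacle I expect lies in the boundary-to-interior passage and in convexity rather than in the dimension count. Concretely: (i) one must calibrate the snowflake exponent $\theta$ against the visual parameter $\eps$ so that the embedded boundary sits inside $\partial_\infty\bbH^{N+1}$ with the correct quasi-symmetry modulus, not merely bi-Lipschitzly on an abstract snowflake; (ii) the extension principle from boundary quasi-symmetries to interior quasi-isometries requires a canonical identification of the filling's boundary with $Z$ together with uniform geodesic stability (a Morse-type argument) to obtain explicit quasi-isometry constants; and (iii) one must verify both that the image lies near a convex subset and that this convex hull is itself quasi-isometric to the filling. Step (ii), the boundary-to-interior extension with controlled constants, is the technical heart of the argument.
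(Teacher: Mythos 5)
First, a point of order: the paper does not prove Theorem \ref{schramm} at all --- it is quoted from \cite{bonkemb} and used as a black box in the application to Gromov spaces. So the only meaningful comparison is with Bonk and Schramm's own argument. Your outline shares its skeleton with theirs: exponential/visual metrics, doubling extracted from bounded growth at some scale, Assouad's snowflake embedding into $\bbR^N$ viewed as (part of) the boundary of an upper half-space, a hyperbolic cone, and a final convexity step. In that sense you have reconstructed the right circle of ideas.

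However, the step you dismiss with ``after arranging $M$ to be visual one has $M$ quasi-isometric to $\mathrm{Con}(\partial_\infty M)$'' is a genuine gap, and it is not among the three obstacles you list. A Gromov space with bounded growth at some scale need not be visual, and for non-visual spaces the identification of $M$ with $\mathrm{Con}(\partial_\infty M)$ is simply false: a closed horoball in $\bbH^2$ (with its induced, geodesic metric) is Gromov hyperbolic and has bounded growth at some scale, yet its boundary at infinity is a single point, so $\mathrm{Con}(\partial_\infty M)$ is degenerate while $M$ is unbounded --- even though the theorem holds trivially for it, the horoball being itself a convex subset of $\bbH^2$. The same example breaks your last step, since the convex hull of the embedded \emph{boundary} set is then a single point; what one actually needs is that the image of $M$ is quasiconvex in $\bbH^{n}$ (Morse stability of quasi-geodesics, using that $M$ is geodesic) together with the fact that in $\bbH^{n}$ a quasiconvex set lies within bounded Hausdorff distance of its convex hull. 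The cited proof is engineered precisely to sidestep the visuality issue: rather than applying Assouad's theorem only to $\partial_\infty M$ and rebuilding $M$ as a filling, one applies the doubling argument to the Gromov closure $M\cup\partial_\infty M$ equipped with the quasi-metric $\rho_\varepsilon(x,y)\asymp e^{-\varepsilon(x\mid y)_o}$, obtains a bi-Lipschitz embedding $f$ of a snowflake of this space into $\bbR^N$, and lifts it by the explicit height map $x\mapsto (f(x),\,e^{-\varepsilon d(o,x)})$ into the upper half-space model of $\bbH^{N+1}$, which a direct Gromov-product computation shows to be a rough similarity on all of $M$, visual or not. If you insist on your boundary-only route, you must first embed $M$ isometrically into a visual Gromov hyperbolic space that still has bounded growth at some scale (for instance by attaching geodesic rays along a separated net) and restrict at the end; either way, this reduction is an unavoidable fourth obstacle, and as written your argument fails exactly on the spaces for which it is needed.
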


Consider a Gromov space $M $  with bounded growth at some scale.
We will show that the Boolean model of percolation in $M$ is subcritical. It follows from Theorem \ref{schramm} that there exists a \qi $ F: M \to Y \subset \bbH ^ n $, where $Y$ is a convex set. Without loss of generality, we can assume that $ Y $ has non-empty interior. For otherwise, since $Y$ is convex, it would be contained in a lower dimensional hyperbolic space (see \cite{bridson}).

We can also assume that the dimension of $Y$ is at least $2$. Otherwise, $Y$ would be quasi isometric to a subset of $ \bbR $ and in this case the problem of subcriticality is trivial.

Thus, we may assume that $M$ is quasi-isometric to a bounded subset $Y$ of $\mathbb{H}^n$ with non empty interior and dimension at least $2$.

\begin{teo}
Let $M$  be a Gromov space with bounded growth at some scale.
Then, for sufficiently large radius, the Boolean model of percolation in $M$ is subcritical.
\end{teo}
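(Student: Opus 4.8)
The plan is to transport subcriticality from hyperbolic space, where it is known, back to $M$ through the quasi-isometry of Theorem~\ref{schramm} and the invariance Theorem~\ref{teomaisimp}. By the reduction preceding the statement we may assume that $M$ is quasi-isometric to a convex subset $Y\subset\bbH^n$ with nonempty interior and dimension at least $2$. Let $F:M\to Y$ be the \qi given by Theorem~\ref{schramm}, with parameters $(\alpha,\beta,\gamma)$, and let $G:Y\to M$ be a quasi-inverse; as recalled above, $G$ may be taken with the same parameters $(\alpha,\beta,\gamma)$.

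First I would record that the Poisson Boolean model in $\bbH^n$ has a subcritical phase: for every fixed radius $\rho$ there is a critical intensity $\lambda_c(\rho)>0$ below which the occupied region almost surely has no unbounded component \cite{tykesson}. Next I would push this down to $Y$ by a monotone coupling. Equip $Y$ with the restriction of the hyperbolic volume and with the induced length metric; since $Y$ is convex, this metric agrees with the restriction of $d_{\bbH^n}$, so $B_Y(x,\rho)=B_{\bbH^n}(x,\rho)\cap Y$ for $x\in Y$. Realizing the Poisson process on $Y$ as the restriction to $Y$ of a Poisson process on $\bbH^n$, one has $\cC_Y\subset\cC$, and hence every occupied component in $Y$ is contained in an occupied component of $\bbH^n$. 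Therefore the absence of unbounded occupied components in $\bbH^n$ forces their absence in $Y$, so $(\chi,\rho)$ is subcritical in $Y$ for $\lambda<\lambda_c(\rho)$.

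It then remains to transport subcriticality from $Y$ to $M$. Choosing $\rho=R$ large enough that $R>\alpha\beta+2\alpha\gamma$ and regarding $G:Y\to M$ as a \qi whose (subcritical) domain is $Y$ and whose codomain is $M$, Theorem~\ref{teomaisimp}(a) produces $\lambda'>0$ and $R'>0$ for which the Poisson Boolean model $(\chi',R')$ in $M$ is subcritical. This is exactly the assertion.

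The main obstacle is to verify that $G$ is an \mmqi, which Theorem~\ref{teomaisimp} requires. For the covering $\{K_i\}$ of $M$ furnished by Proposition~\ref{particaoqi} one must exhibit uniform constants with $C_1<\mu'(K_i)<C_2$ and $C_3<\mu^*(K_i)<C_4$. Each $K_i$ lies in a ball of radius $\gamma$ in $M$, so by Lemma~\ref{quasibolamaior} its preimage $E_i=G^{-1}(K_i)$ is trapped between two balls of fixed radii in $Y$; the upper bound on $\mu^*(K_i)=\mu(E_i)$ then comes from Bishop--Gromov, since the Ricci curvature of $\bbH^n$ is bounded below, while the lower bound follows from the homogeneity of $\bbH^n$ together with the nonempty interior of $Y$, which guarantees that each $E_i$ contains a ball of fixed radius (compare Theorem~\ref{chavelbaixo}). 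On the $M$ side, the bounded growth at some scale is precisely what makes the volumes $\mu'(K_i)$ uniformly comparable. Assembling these uniform bounds is the technical crux; once they are in hand, all hypotheses of Theorem~\ref{teomaisimp}(a) hold and the proof is complete.
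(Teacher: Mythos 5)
Your proposal follows essentially the same route as the paper's own proof: invoke Tykesson's subcriticality result in $\bbH^n$, restrict/couple the model to the convex subset $Y$ so that absence of unbounded components in $\bbH^n$ forces absence in $Y$, and then transport subcriticality from $Y$ to $M$ through the quasi-inverse of the \qi from Theorem~\ref{schramm} together with Theorem~\ref{teomaisimp}, after checking measure compatibility via ball-volume bounds in $\bbH^n$ (the paper does this by citing Theorem~\ref{variedadefunciona} and the radius-only dependence of hyperbolic ball volumes, where you argue via Bishop--Gromov and homogeneity). The argument is correct and, if anything, slightly more explicit than the paper's in the coupling step and in the verification that the quasi-isometry is measure compatible.
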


\begin{proof}
Let $(\chi,R)$ be the homogeneous Poisson Boolean model in $\bbH^n$ driven by a homogeneous Poisson point process $\chi$ in $\mathbb{H}^n$ with intensity $\lambda$. It follows from Theorems $4.2$ and $4.3$ in \cite{tykesson} that for sufficiently large radius there exists $\lambda_c$ such that for $\lambda < \lambda_c$ the homogeneous Boolean model is subcritical. Therefore, we assume that the intensity of $\chi$ is $\lambda < \lambda_c$ and that the radius of the random balls is sufficiently large.

Let $Y$ be the bounded subset of $\mathbb{H}^n$ whcih is quasi-isometric to $M$. Denote by $\Sigma(Y)$ the Boolean model induced on $Y$, i.e. $\Sigma(Y)=\bigcup_{y \in Y} B(y, R)$. Let $\mathcal{C}(Y) $ and $ \mathcal{C}(\bbH^n) $ denote the connected occupied components for the Boolean models in $Y$ and $ \bbH^n$ respectively.
Since there is no unbounded connected component in $ \bbH ^ n $,
then there is no unbounded connected component in $ Y $. Therefore, the Boolean model $\Sigma (Y)$ does not percolate. By the previous theorem, there exists a \qi $ F: Y \to M $. Since the volumes of any ball depends only on it radius (see \cite{ratcliffe} ), it follows from Theorem \ref{variedadefunciona} that $F$ satisfies the hypotheses of Theorem \ref{teomaisimp}. Then we may conclude that the Boolean model of percolation in $M$ does not percolate. This finishes the proof.
\end{proof}

\section{Proof of the Main Result}
\label{provadoteomaisimp}

The proof of Theorem \ref{teomaisimp} is based on a coupling between a Poisson Boolean model in $N$ with intensity  measure  $\mu^*$ and the image under a mm-quasi-isometry of the occupied connected components of a Poisson Boolean model in $M$. Then we consider a version of the homogeneous Poisson Boolean model in $N$ and we arrive at the conclusion of Theorem \ref{teomaisimp} by a suitable comparison between the Poisson Boolean model with bounded intensity and the homogeneous Boolean model in $N$.

\begin{defi} Let $F: M \rightarrow N$ be a quasi-isometry and let $\mu$ be a measure on $\left(M,\mathcal{B}\left(M\right)\right)$. Let $(\chi,R)$ be the Boolean model in $M$ with balls of fixed radius $R$. Then
 \begin{enumerate}[a)]
  \item the \negrito{induced Boolean model} $(\chi',R')$ is
the Boolean model with balls of fixed radius $R'$ in $N$ 
where the center of the random balls belongs to $F \left(\chi \right)$;
 \item the \negrito{induced Poisson Boolean model with balls of fixed radius $R'$}  is the 
Boolean model  in $N$ driven by a Poisson point process with intensity  measure  $\mu^*$ and with balls of fixed radius $R'$.
 \end{enumerate}
\end{defi}

\begin{prop}
\label{inducedBooleanmodel}
(Percolation in the induced Boolean model)
 Let $F: M \rightarrow N$ be a quasi-isometry with parameters $\left(\alpha, \beta, \gamma \right)$ and  let $\bmodel$ be the Boolean model in $M$ with balls of fixed radius $R$. Then
  \begin{enumerate}[a)]
  \item if $\bmodel$ percolates, then there exists $R' > 0$ such that the induced Boolean model 
$(\chi', R')$ in $N$ percolates;
  \item if $\bmodel$ does not 
percolate  and $R > \alpha\beta+2\alpha\gamma$,  then there exists $R' > 0$ such that the induced Boolean model $(\chi', 
R')$ in $N$ does not percolate.
  \end{enumerate}
\end{prop}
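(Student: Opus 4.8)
The plan is to reduce percolation of each Boolean model to a combinatorial statement about its centres and then transport that statement across $F$. The connected components of $\cC=\bigcup_{x\in\chi}B(x,R)$ are the unions of balls taken over the clusters of the overlap relation $B(x,R)\cap B(y,R)\neq\emptyset$, so $\cC$ has an unbounded component if and only if this relation has a cluster whose centre set is unbounded; the analogous statement holds in $N$ for the induced model, with radius $R'$ and centres $F(\chi)$. Because the induced model has centre set $\chi'=F(\chi)$ by definition, both models live on the same probability space, and it suffices to argue deterministically, for a fixed realisation, that an unbounded cluster on one side forces an unbounded cluster on the other. I will use throughout that the quasi-distance inequalities transport (un)boundedness of centre sets: the lower bound $d_N(F(x),F(y))\ge\frac1\alpha d_M(x,y)-\beta$ sends unbounded sets to unbounded sets, and the upper bound $d_N(F(x),F(y))\le\alpha d_M(x,y)+\beta$ sends bounded sets to bounded sets.

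For part (a) I would set $R'=\alpha R+\beta$ and transport overlaps forward using Lemma \ref{quasibolamenor}(a). If $B(x,R)\cap B(y,R)\neq\emptyset$, choose $z$ in the intersection; then $F(z)\in F(B(x,R))\cap F(B(y,R))\subset B(F(x),R')\cap B(F(y),R')$, so the two $R'$-balls in $N$ meet. Hence $F$ carries an unbounded occupied cluster $C$ of $(\chi,R)$ to a cluster of the induced model whose centre set $F(C)$ is unbounded by the lower bound, and the induced model percolates. This direction requires no lower bound on $R$, consistent with the statement.

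Part (b) I would prove by contraposition, transporting overlaps backward; this is where the hypothesis $R>\alpha\beta+2\alpha\gamma$ is used. Since $R>\alpha\beta+2\alpha\gamma$, I may choose $R'$ with $\gamma<R'\le\frac{R-\alpha\beta}{2\alpha}$, so that $\alpha(2R'+\beta)\le R$; the argument proving Lemma \ref{quasibolamaior}(b) then yields, for every $p$, the containment $B(F(p),2R')\cap F(M)\subset F(B(p,R))$. Suppose now the induced model in $N$ has an unbounded occupied component, i.e. an unbounded cluster $D\subset F(\chi)$ for the $R'$-overlap relation, and take $x,y\in\chi$ with $F(x),F(y)\in D$ and $B(F(x),R')\cap B(F(y),R')\neq\emptyset$. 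Then $d_N(F(x),F(y))<2R'$, so $F(y)\in B(F(x),2R')\cap F(M)\subset F(B(x,R))$; pick $z\in B(x,R)$ with $F(z)=F(y)$. The lower bound gives $d_M(y,z)\le\alpha\beta<R$, hence $z\in B(y,R)$ and $z\in B(x,R)\cap B(y,R)$, so the preimage balls meet. Pulling back the whole chain, the preimage centres $C=\{x\in\chi:F(x)\in D\}$ form an occupied cluster of $(\chi,R)$ that is unbounded because $D$ is; this contradicts non-percolation of $(\chi,R)$. As both events are carried by the same space, this gives $\P(\mbox{induced model percolates})\le\P((\chi,R)\ \mbox{percolates})=0$.

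The main obstacle is exactly the backward connectivity of part (b): in a general, non-geodesic Polish space $d_N(F(x),F(y))<2R'$ need not force $B(x,R)$ and $B(y,R)$ to meet, so one cannot transport overlaps merely through centre distances. The containment in Lemma \ref{quasibolamaior}(b) circumvents this by producing a genuine point $z\in B(x,R)$ with $F(z)=F(y)$, after which $F(z)=F(y)$ forces $d_M(y,z)\le\alpha\beta<R$; the hypothesis $R>\alpha\beta+2\alpha\gamma$ is precisely what lets the containment hold at radius $2R'$ while keeping $R'>\gamma$, the latter being what is later required for the comparison with the homogeneous model. Multiplicities of $\chi$ and the merely coarse density of $F(M)$ in $N$ cause no difficulty, since every centre of the induced model already lies in $F(\chi)$ and coincident centres yield identical balls.
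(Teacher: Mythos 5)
Your proof is correct and rests on the same geometric input as the paper's, namely Lemma \ref{quasibolamenor}, but it is organised differently and is genuinely more careful in part (b). The paper works at the level of occupied regions: for (a) it uses $F(B(p,R))\subset B(F(p),R')$ to place the image of an unbounded component inside the induced occupied region, and for (b) it uses $B(F(p),R')\cap F(M)\subset F(B(p,R))$ to claim that every component of the induced model sits inside the image of a component in $M$, leaving the transport of connectivity implicit in both directions. You instead reduce percolation to unboundedness of clusters of the overlap relation on centres and transport overlaps across $F$; this makes the connectivity step explicit, and your contrapositive treatment of (b) patches a genuine gap in the paper's argument: two induced balls may intersect at a point of $N\setminus F(M)$, where the paper's containment says nothing, and in a non-geodesic space the resulting inequality $d_N(F(x),F(y))<2R'$ does not by itself force $B(x,R)\cap B(y,R)\neq\emptyset$. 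Your sharper choice $\gamma<R'\le(R-\alpha\beta)/(2\alpha)$, which validates the containment at radius $2R'$ and produces an actual common point $z\in B(x,R)\cap B(y,R)$ with $F(z)=F(y)$, resolves exactly this, and it consumes the hypothesis $R>\alpha\beta+2\alpha\gamma$ in the same way the paper's lemma does (taking $k=\gamma$, and keeping $R'>\gamma$, which is what the subsequent comparison with the induced Poisson model requires). The paper's version buys brevity; yours buys an argument valid verbatim in arbitrary locally compact Polish spaces. The one caveat, shared equally by the paper, is your opening identification of topological components of the occupied region with unions of balls over overlap clusters: in spaces whose balls need not be connected this is a convention rather than a theorem, though it is plainly the convention the paper's setting intends.
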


\begin{proof} We begin by proving the first part of the proposition. Assume that the Boolean model $\bmodel$ in $M$ percolates. 
Let $(\chi', R')$ be the induced Boolean model in $N$ where the existence of $R'$ is guaranteed by part $a)$ of Lemma 
\ref{quasibolamaior}. Then the image of the Boolean model $\bmodel$ under $F$ is contained in 
the union of all the balls in the Boolean model $(\chi', R')$. This implies that the image 
of any unbounded connected component in the Boolean model $\bmodel$ is contained in 
a subset of the union of all balls in the induced Boolean model. Since $F$ is a quasi-isometry, we may conclude that the induced Boolean model percolates.

Now we prove the second part of the proposition. Assume that the Boolean model $\bmodel$ in $M$ 
does not percolate. Let $(\chi', R')$ be the induced Boolean model in $N$ where $R'$ is given by part $b)$ of Lemma \ref{quasibolamenor}. Then, every ball in the Boolean model $(\chi', R')$ is contained in the image of the Boolean model $\bmodel$. This implies that all 
the connected components of the induced Boolean model are contained in the 
image of the connected components of the Boolean model in $M$. Since there is no unbounded connected component in $M$ and $F$ is a quasi-isometry, then there is no unbounded connected component in the induced Boolean model in $N$. This completes the proof

\end{proof}

\begin{teo}(Percolation in the induced Poisson Boolean model) \label{existqi}
 Let $F: M \to N$ be a quasi-isometry with parameters $(\alpha,\beta,\gamma)$.  Let 
$(\chi, R)$ be the Boolean model in $M$ with balls of fixed radius $R$. Then
\begin{enumerate}[a)]

\item If $(\chi, R)$  percolates , then  there exists $R'$ such that the induced Poisson Boolean model in $N$ with balls of fixed radius $R'$ percolates. \item If $(\chi,R)$  does not percolate and if
 $R>\alpha\beta+2\alpha\gamma$, then there exists $R'$ such that the induced Poisson Boolean model in $N$ with balls of fixed radius $R'$ does not percolate.
\end{enumerate}
\end{teo}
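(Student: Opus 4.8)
The plan is to deduce this statement from Proposition~\ref{inducedBooleanmodel} by means of a coupling between the image point process $F(\chi)$, which drives the induced Boolean model, and a genuine Poisson point process $\chi''$ on $N$ with intensity measure $\mu^*$, which drives the induced Poisson Boolean model. The difficulty is that $F(\chi)$ is \emph{not} a Poisson process on $N$: pushing $\chi$ forward by $F$ keeps the right number of points in each cell, but their positions inside a cell are images of $\mu$-distributed points rather than $\mu^*$-distributed points. The two facts that make the comparison work are that the cells $K_i$ of Proposition~\ref{particaoqi} have diameter at most $2\gamma$ (each $K_i$ lies in a ball $B(y_i,\gamma)$), and that $\mu^*(K_i)=\mu(E_i)$ by the very definition of $\mu^*$.

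First I would set up the coupling. Using the partitions $\{E_i\}$ of $M$ and $\{K_i\}$ of $N$ from Proposition~\ref{particaoqi}, together with the cell-by-cell construction of Poisson processes recalled in Section~\ref{ppconstruction}, both $\chi$ restricted to the $E_i$ and $\chi''$ restricted to the $K_i$ are families of independent Poisson variables, and since $\mu^*(K_i)=\mu(E_i)$ these counts have equal means. I would therefore couple them so that, for every $i$,
\[
\chi''(K_i)=\chi(E_i)=F(\chi)(K_i),
\]
the last equality holding because $E_i=F^{-1}(K_i)$ (reading $F(\chi)=\sum_{x\in\chi}\delta_{F(x)}$ as a counting measure, so that coincidences produced by a non-injective $F$ are counted with multiplicity and are harmless for percolation). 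Inside each cell I then fix an arbitrary bijection between the $\chi''(K_i)$ genuine points and the $F(\chi)(K_i)$ image points; as both families lie in $K_i$, every matched pair is at distance less than $2\gamma$.

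Next I would transfer percolation across this matching. For part (a), apply Proposition~\ref{inducedBooleanmodel}(a) to obtain $R'$ such that the induced Boolean model $(\chi',R')$ percolates; if two image balls of radius $R'$ overlap, then by the triangle inequality the matched genuine balls of radius $R''=R'+2\gamma$ overlap as well, so the ball-overlap graph of the image model embeds into that of $\chi''$ with radius $R''$, and an infinite component of the former forces one of the latter. For part (b), apply Proposition~\ref{inducedBooleanmodel}(b); here I would use the freedom in Lemma~\ref{quasibolamenor}(b) — whose proof allows any image radius $R'$ with $\alpha(R'+\beta)<R$ — to choose $R'>2\gamma$, which is possible precisely because $R>\alpha\beta+2\alpha\gamma$ forces $R/\alpha-\beta>2\gamma$. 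Setting $R''=R'-2\gamma>0$, each genuine ball $B(z,R'')$ sits inside the matched image ball $B(w,R')$, so the occupied region of $\chi''$ with radius $R''$ is contained in the occupied region of the induced Boolean model; as the latter has only bounded components, so does the former, and the induced Poisson Boolean model does not percolate.

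The main obstacle is exactly the passage from the non-Poissonian image process $F(\chi)$ to an honest Poisson process of intensity $\mu^*$, and the heart of the argument is that this passage costs only a bounded change of radius: the uniform diameter bound $2\gamma$ on the cells lets an arbitrary within-cell matching control all the relevant distances, while the identity $\mu^*(K_i)=\mu(E_i)$ guarantees that the Poisson counts can be coupled exactly. I would also record the standard fact that, for a locally finite configuration, an infinite component of the ball-overlap graph is almost surely an unbounded occupied component and conversely, which is what lets me phrase the transfer of percolation purely in terms of the overlap graphs.
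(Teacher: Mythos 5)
Your proposal is correct and follows essentially the same route as the paper's own proof: couple the Poisson counts cell-by-cell using $\mu^*(K_i)=\mu(E_i)$ and the construction of subsection \ref{ppconstruction}, match the image points $F(p)$ with Poisson points $p^*$ inside each cell $K_i$, and transfer (non-)percolation from the induced Boolean model of Proposition \ref{inducedBooleanmodel} by enlarging or shrinking the radius by the cell diameter. You are in fact slightly more careful than the paper, which asserts that matched points are within distance $\gamma$ (rather than the correct bound $2\gamma$ coming from $K_i\subset B(y_i,\gamma)$) and accordingly only arranges $R'>\gamma$ in part (b), whereas your explicit choice $2\gamma<R'<R/\alpha-\beta$, justified by the flexibility in the proof of Lemma \ref{quasibolamenor}, fixes this constant.
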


\begin{proof}
Since $\mu(E_i) = \mu(F^{-1}(K_i)) = \mu^*(K_i)$, we may assume, without loss of generality, that the Poisson processes driving the Boolean model on $M$ and the Poisson induced Boolean model on $N$ are such that the number of random points in $E_i \subset M$ and in $K_i \subset N$ are a.s. the same. For this purpose it suffices to consider versions of these Poisson processes which use the same family of Poisson random variables used in the construction of the Poisson point process on $M$ with intensity measure $\mu$ and the Poisson point process on $N$ with intensity measure $\mu^*$ (see subsection \ref{ppconstruction} for more details). 
 \begin{enumerate}[a)]

  \item By Proposition \ref{inducedBooleanmodel}, there exists $R' > 0$ such that the induced Boolean model $(\chi', R')$ percolates. Since we may assume that the number of points of $\chi$ in $E_i$ and of $\chi'$ in $K_i$ are  a.s. the same, then each point $F(p)$ of the induced point process will be at a distance no greater than $\gamma$ of some point $p^*$ in the induced Poisson point process. Therefore, $B(F(p), R') \subset B(p^*, R'+ \gamma)$ and  $\bigcup B(F(p), R') \subset \bigcup B(p^*, R'+\gamma)$. Since the induced process percolates, then the induced model will also percolate.
  \item 
  
  Assume that the Boolean model $(\chi,R)$ does not percolate in $M$. By Proposition \ref{inducedBooleanmodel} we have that there exists $R'$ such that the induced Boolean model $(\chi', R')$ does not percolate. By Lemma \ref{quasibolamenor} we can assume that $R'>\gamma$. It follows from the discussion above and from the quasi-surjectivity of the quasi-isometry $F$ that any point $F(p)$ in the induced Boolean model is at a distance no greater than $\gamma$ of some point $p^*$ in the induced Poisson Boolean model. Then for every point $p \in \chi$, there exists a.s. a point $p^*$ in the Poisson induced point process such that $d_{N}(F(p),p^*) < \gamma$ which implies $B(p^*, R' -\gamma) \subset B(F(p), R')$ and  $\bigcup B(p^*, R' -\gamma) \subset \bigcup B(F(p), R')$. Since the induced Boolean model does not percolate, then the Poisson induced Boolean model does not percolate too.
 \end{enumerate}

\end{proof}

\begin{lema}\label{intmono}
 Let $F: M \to N$ be a \qi with parameters  $\left(\alpha, \beta, \gamma \right)$.
 Let $\mu_1$ and $\mu_2$ be two measures defined in $\mathcal{B}\left(M\right)$ and let $\mu'$ be a measure defined in $\mathcal{B}\left(N\right)$. Assume that $\mu_1(E) \leq \mu_2(E)$ for any bounded set $E \in \cB(M)$. Then the \qi induced measure satisfies the inequality $\mu_1^*\left(D\right) \leq \mu_2^*\left(D\right)$ for any $D \in \mathcal{B}\left(N\right)$.
\end{lema}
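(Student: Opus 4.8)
The plan is to read the conclusion off directly from the defining formula \eqref{muestrela} of the induced measure, exploiting the fact that the dependence on the measure on $M$ enters only through the numbers $\mu(E_i)$, and then applying the monotonicity hypothesis on the (bounded) sets $E_i$ term by term.

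First I would recall that, for a measure $\mu$ on $\cB(M)$, the induced measure is given by \eqref{muestrela}, where the numerator is $\mu^*(K_i)=\mu(E_i)$ with $E_i=F^{-1}(K_i)$ and $\{K_i\}$ the fixed covering of $N$ supplied by Proposition \ref{particaoqi}. Explicitly, for the two measures $\mu_1,\mu_2$ this reads
$$
\mu_j^*(D) = \mu'(D)\!\!\sum_{i:\, D \cap K_i \neq \emptyset} \frac{\mu_j(E_i)}{\mu'(K_i)}, \qquad j=1,2 .
$$
The crucial observation is that the index set $\{i : D \cap K_i \neq \emptyset\}$ over which the sum ranges depends only on $D$ and on the covering $\{K_i\}$, and not on the chosen measure on $M$; hence in comparing $\mu_1^*(D)$ and $\mu_2^*(D)$ we sum over exactly the same collection of indices.

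Next I would verify that the hypothesis applies to each summand. By Proposition \ref{particaoqi} the set $E_i=F^{-1}(K_i)$ has compact closure, hence is bounded, so $E_i\in\cB(M)$ is a bounded Borel set and the assumption $\mu_1(E)\le\mu_2(E)$ gives $\mu_1(E_i)\le\mu_2(E_i)$ for every $i$. Since the weights $\mu'(K_i)$ are strictly positive (this is precisely what makes the ratio in \eqref{muestrela} meaningful) and $\mu'(D)\ge 0$, the operations of dividing by $\mu'(K_i)$, summing over the common index set, and multiplying by $\mu'(D)$ all preserve the inequality. This yields $\mu_1^*(D)\le\mu_2^*(D)$ for every $D\in\cB(N)$, as claimed.

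There is no genuinely hard step here: the statement is essentially the linearity and monotonicity of the map $\mu\mapsto\mu^*$ in the arguments $\mu(E_i)$. The only point demanding a moment's care is checking that each $E_i$ is bounded so that the monotonicity hypothesis is applicable, and this is exactly the content of Proposition \ref{particaoqi}; the positivity of the denominators $\mu'(K_i)$ is built into the very definition of $\mu^*$.
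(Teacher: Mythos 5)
Your proposal is correct and follows essentially the same route as the paper: apply the hypothesis to each $E_i=F^{-1}(K_i)$ to get $\mu_1^*(K_i)\le\mu_2^*(K_i)$, then compare the defining sums for $\mu_1^*(D)$ and $\mu_2^*(D)$ term by term over the common index set. In fact you are slightly more careful than the paper, which applies the hypothesis to $F^{-1}(K_i)$ without explicitly noting that Proposition \ref{particaoqi} makes these sets bounded (compact closure), the very condition under which the hypothesis is stated.
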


\begin{proof}
 Let $D \in \cB(N)$. Since $\mu_1^*\left(K_i\right)=\mu_1\left(F^{-1}\left(K_i\right)\right) \leq \mu_2\left(F^{-1}\left(K_i\right)\right) = \mu_2^*\left(K_i\right)$, we get 

$$
 \mu_1^*\left(D\right) = \mu'(D) \displaystyle \sum_{i : D \cap K_i \neq \emptyset} \frac{\mu_1^*(K_i)}{\mu'(K_i)} \leq \mu'(D) \displaystyle \sum_{i: D \cap K_i \neq \emptyset} \frac{\mu_2^*(K_i)}{\mu'(K_i)} = \mu_2^*(D),
$$ 
which is the desired conclusion.
\end{proof}

 \begin{lema}\label{linhaestrela}
 Let $F: M \to N$ be a \qi with parameters $\left(\alpha, \beta, \gamma \right)$ . Let $\mu$ and $\mu'$ be two measures defined in $\mathcal{B}\left(M\right)$ and $\mathcal{B}\left(N\right)$ respectively. Also, let $\mu^*$ be the quasi-isometry induced measure. Assume that there exist strictly positive constants $C_1,C_2,C_3,C_4$ such that $C_1 < \mu'(K_i) < C_2$ and $C_3 < \mu^*(K_i) < C_4$ for any $i$. Then there exist strictly positive constants $\bar{C}_1$ and $\bar{C_2}$ such that
$$
\bar{C}_1 \mu'(D) < \mu^*(D) < \bar{C}_2 \mu'(D)
$$
for any bounded set $D \in \mathcal{B}(N)$.
\end{lema}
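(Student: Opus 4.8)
The plan is to reduce the statement to a uniform two-sided bound on the per-block densities $\rho_i := \mu^*(K_i)/\mu'(K_i)$ and then to exploit the fact that $\{K_i\}$ is a \emph{disjoint} cover of $N$ (Proposition \ref{particaoqi}), so that the induced measure $\mu^*$ reconstructs itself block by block as an integral of these densities against $\mu'$. First I would bound the densities: the hypotheses $C_1 < \mu'(K_i) < C_2$ and $C_3 < \mu^*(K_i) < C_4$ hold for \emph{every} $i$, so dividing gives
$$
\frac{C_3}{C_2} < \frac{\mu^*(K_i)}{\mu'(K_i)} < \frac{C_4}{C_1}
$$
uniformly in $i$. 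I would then set $\bar{C}_1 := C_3/C_2$ and $\bar{C}_2 := C_4/C_1$ as the candidate constants.

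Next I would decompose a bounded set $D \in \mathcal{B}(N)$ along the partition. Applying the definition \eqref{muestrela} to subsets contained in a single block shows that, restricted to each $K_i$, the measure $\mu^*$ is exactly $\rho_i$ times $\mu'$; since $\mu^*$ is a measure and the blocks are pairwise disjoint and cover $N$, writing $D = \bigsqcup_i (D \cap K_i)$ and using countable additivity yields
$$
\mu^*(D) = \sum_{i} \rho_i\, \mu'(D \cap K_i).
$$
Substituting the uniform density bounds term by term and factoring the constants out of the sum gives
$$
\bar{C}_1 \sum_{i} \mu'(D \cap K_i) < \mu^*(D) < \bar{C}_2 \sum_{i} \mu'(D \cap K_i),
$$
and collapsing $\sum_{i} \mu'(D \cap K_i) = \mu'(D)$ produces the desired $\bar{C}_1\, \mu'(D) < \mu^*(D) < \bar{C}_2\, \mu'(D)$.

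The step I expect to be the crux is the block decomposition: justifying that the local densities $\rho_i$ reassemble $\mu^*(D)$ so that the weighting sum telescopes to $\mu'(D)$. This is precisely where the disjointness and covering properties of $\{K_i\}$ from Proposition \ref{particaoqi} are indispensable, since without disjointness a single point of $D$ could be counted in several blocks and the bound would acquire an uncontrolled multiplicity. Finally, I would note that the strict inequalities are meaningful whenever $\mu'(D) > 0$, in which case at least one block $D \cap K_i$ carries positive $\mu'$-mass and the strict density bounds survive the summation; in the degenerate case $\mu'(D) = 0$ one has $\mu^*(D) = 0$ as well, so the estimate is to be read on sets of positive measure, which is the only case needed for the comparison arguments of Section \ref{provadoteomaisimp}.
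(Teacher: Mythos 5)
Your proof is correct and follows essentially the same route as the paper's: both extract the block relation $\mu^*(D \cap K_i) = \bigl(\mu^*(K_i)/\mu'(K_i)\bigr)\,\mu'(D \cap K_i)$ from the definition of $\mu^*$, decompose $D$ along the disjoint partition $\{K_i\}$, bound term by term, and land on the same constants $\bar{C}_1 = C_3/C_2$ and $\bar{C}_2 = C_4/C_1$. The only differences are cosmetic: the paper invokes finiteness of $\{i : D \cap K_i \neq \emptyset\}$ for bounded $D$ where you simply use countable additivity, and you additionally flag the degenerate case $\mu'(D) = 0$, where the strict inequalities of the statement literally fail --- a detail the paper glosses over.
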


\begin{proof}
Let $D \in \mathcal{B}\left(N\right)$ be a bounded set. It follows from the definition of $\mu^*$ that $\mu'\left(K_j\right) \mu^*\left(D \cap K_i\right) =\mu'\left(D \cap K_i\right) \mu^*\left(K_i\right)$. Since $D$ is bounded and $\{i : D \cap  K_i \neq \emptyset\}$ is finite, we have
$$
\mu^*\left(D\right) = \displaystyle \sum_{i:D \cap K_i \neq \emptyset} \mu^*\left(D \cap K_i\right) < \frac{C_4}{C_1} \displaystyle \sum_{i:D \cap K_j \neq \emptyset} \mu'\left(D \cap K_i\right) = \frac{C_4}{C_1} \mu'\left(D\right).
$$

Analogously, we may show that $\frac{C_3}{C_2} \mu'\left(D\right) \leq \mu^*\left(D\right)$. Then make $\bar{C}_1 = \frac{C_3}{C_2}$ and $\bar{C}_2 = \frac{C_4}{C_1}$, and the proof is complete.
\end{proof}
 
 Now we show that we can construct a homogeneous point process using a point process with bounded intensity measure.

\begin{defi}
 We say that a point process $\chi$ with intensity measure $\mu_\Lambda$ has 
\negrito{bounded intensity with respect to $\mu$} if, for any bounded set $E \in \mathcal{B}\left(M\right)$, there exist strictly positive constants $\lambda_1, \lambda_2$  such that 
$\lambda_1\mu(E)< \mu_\Lambda(E) <\lambda_2\mu(E)$, where $\mu_\Lambda(E) = \int_E \Lambda d\mu$. 
\end{defi}
When the measure is clear from the context, we will only say that the point process has bounded intensity.

\begin{prop}\label{proposicaomuestrela}
 Let $F: M \to N$ be a \mmqi  and let $\mu$ and $\mu'$ be two measures defined in $\mathcal{B}\left(M\right)$ and $\mathcal{B}\left(N\right)$ respectively. Let $\chi$ be a Poisson point process in $M$ with bounded intensity. Let $D \in \mathcal{B}\left(N\right)$ be any bounded set. Then there exist strictly positive constants $\bar{\lambda_1}$ and $\bar{\lambda_2}$ such that 
 $$
 \bar{\lambda_1}\mu'(D) < \mu^*_\Lambda(D) < \bar{\lambda_2}\mu'(D).
 $$
\end{prop}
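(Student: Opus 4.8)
The plan is to deduce the statement directly from the monotonicity of the induced measure (Lemma \ref{intmono}) and the two-sided comparison of Lemma \ref{linhaestrela}, with the only extra ingredient being that the induced-measure construction is homogeneous under scaling of the base measure. Here $\mu^*_\Lambda$ denotes the quasi-isometry induced measure of the intensity measure $\mu_\Lambda$ of $\chi$. Since $\chi$ has bounded intensity with respect to $\mu$, there are strictly positive constants $\lambda_1,\lambda_2$ with $\lambda_1\mu(E) < \mu_\Lambda(E) < \lambda_2\mu(E)$ for every bounded $E \in \mathcal{B}(M)$.

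First I would observe that for any constant $c>0$ one has $(c\mu)^* = c\,\mu^*$. This is immediate from the definition of the induced measure: $(c\mu)^*(K_i) = (c\mu)(E_i) = c\,\mu^*(K_i)$, and the defining formula for $\mu^*(D)$ is linear in the numbers $\mu^*(K_i)$, so the whole measure scales by $c$. Next I would apply Lemma \ref{intmono} twice, to the pairs $\lambda_1\mu \le \mu_\Lambda$ and $\mu_\Lambda \le \lambda_2\mu$ of measures on $\mathcal{B}(M)$; monotonicity of the $*$-operation then gives $(\lambda_1\mu)^*(D) \le \mu^*_\Lambda(D) \le (\lambda_2\mu)^*(D)$ for every $D \in \mathcal{B}(N)$, which by the scaling identity reads $\lambda_1\mu^*(D) \le \mu^*_\Lambda(D) \le \lambda_2\mu^*(D)$.

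Finally, because $F$ is an \mmqi, Lemma \ref{linhaestrela} applies to $\mu$ and its induced measure $\mu^*$, furnishing strictly positive $\bar{C}_1,\bar{C}_2$ with $\bar{C}_1\mu'(D) < \mu^*(D) < \bar{C}_2\mu'(D)$ for every bounded $D \in \mathcal{B}(N)$. Multiplying by $\lambda_1$ and $\lambda_2$ respectively and chaining with the previous estimate yields $\lambda_1\bar{C}_1\,\mu'(D) < \mu^*_\Lambda(D) < \lambda_2\bar{C}_2\,\mu'(D)$, so it suffices to set $\bar{\lambda}_1 = \lambda_1\bar{C}_1$ and $\bar{\lambda}_2 = \lambda_2\bar{C}_2$. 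There is no serious obstacle here: the proof is an assembly of the two earlier lemmas, and the only step demanding any thought is recognizing the scaling identity $(c\mu)^* = c\,\mu^*$ together with the fact that the bounded-intensity hypothesis is stated on exactly the class of bounded Borel sets required by Lemma \ref{intmono}. The strict inequalities survive the chaining because strictness already enters through Lemma \ref{linhaestrela} and is only combined with the non-strict bounds by multiplication with the positive constants $\lambda_1,\lambda_2$.
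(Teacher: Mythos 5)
Your proposal is correct and follows essentially the same route as the paper's own proof: apply Lemma \ref{intmono} to the bounded-intensity comparison $\lambda_1\mu < \mu_\Lambda < \lambda_2\mu$, apply Lemma \ref{linhaestrela} to compare $\mu^*$ with $\mu'$, and chain the two estimates with $\bar{\lambda}_1 = \lambda_1\bar{C}_1$, $\bar{\lambda}_2 = \lambda_2\bar{C}_2$. The only difference is cosmetic: you explicitly state and justify the scaling identity $(c\mu)^* = c\,\mu^*$, which the paper uses silently when it passes from Lemma \ref{intmono} to the inequality $\lambda_1\mu^*(D) < \mu^*_\Lambda(D) < \lambda_2\mu^*(D)$, so your write-up is if anything slightly more careful on that point.
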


\begin{proof}
 Let $E \in \mathcal{B}\left(M\right)$ be a bounded set. Since $\chi$ has bounded intensity, there exist strictly positive constants $\lambda_1$ and $\lambda_2$ such that $\lambda_1\mu(E) < \mu_\Lambda(E) < \lambda_2 \mu(E)$. By Lemma \ref{intmono}, we have that $\lambda_1\mu^*(D) < \mu^*_\Lambda(D) < \lambda_2\mu^*(D)$ for any bounded set $D \in \mathcal{B}\left(N\right)$. Since $F$ is a \mmqi, then there exist strictly positive constants $C_1, C_2,C_3, C_4$ such that, for any $i$, $C_1 < \mu'(K_i) < C_2$ and $C_3 < \mu^*(K_i) < C_4$. It follows from Lemma \ref{linhaestrela}, that $\bar{C_1}\mu'(D) <\mu^*(D) < \bar{C_2}\mu'(D)$ for some strictly positive constants $\bar{C_1}$ and $\bar{C_2}$. This implies $\bar{C_1}\lambda_1\mu'(D) <\mu^*_\Lambda(D) < \bar{C_2}\lambda_2\mu'(D)$. Now make $\bar{\lambda_1} = \bar{C_1}\lambda_1, \bar{\lambda_2} = \bar{C_2} \lambda_2$ and the proof is complete.
\end{proof}

\begin{prop}(Homogenization Theorem)\label{homoqi}
 Let $(\chi,R)$ be a Poisson Boolean model in $M$ with bounded intensity measure $\mu_\Lambda$. Then
 \begin{enumerate}[a)]
\item if the Poisson Boolean model does not percolate, then there exists $\lambda_1 > 0$ such that the Poisson Boolean model $(\hat{\chi},R)$ driven by the homogeneous Poisson point process $\hat{\chi}$ with intensity $\lambda_1$ does not percolate;
  
 \item if the Poisson Boolean model percolates, then there exists $\lambda_2 > 0$ such that the Poisson Boolean model $(\hat{\chi},R)$ driven by the homogeneous Poisson point process $\hat{\chi}$ with intensity $\lambda_2$ percolates.
\end{enumerate}  
  
\end{prop}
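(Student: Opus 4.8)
The plan is to sandwich the inhomogeneous process $\chi$ between two \emph{homogeneous} Poisson point processes and then transfer the (non-)percolation property by the monotonicity of the occupied region in the point configuration. This is the standard stochastic-domination scheme, and no change of radius is needed since the same $R$ appears on both sides.

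First I would unwind the hypothesis. By the definition of bounded intensity there are global constants $0 < \lambda_1 \le \lambda_2 < \infty$ with $\lambda_1\mu(E) \le \mu_\Lambda(E) \le \lambda_2\mu(E)$ for every bounded $E \in \bor{M}$. Because $M$ is locally compact and second countable (being Polish) it is $\sigma$-compact, hence a countable increasing union of bounded Borel sets, so the inequality extends by monotone convergence to the inequality of measures $\lambda_1\mu \le \mu_\Lambda \le \lambda_2\mu$ on all of $\bor{M}$. Next I would recall the superposition coupling: whenever $\nu \le \nu'$ are intensity measures, one can realize $\Poisson(\nu)$ and $\Poisson(\nu')$ on a common space with $\Poisson(\nu) \subseteq \Poisson(\nu')$ a.s., by writing $\Poisson(\nu') = \Poisson(\nu)\disjunta\Poisson(\nu'-\nu)$ with independent summands (equivalently, by independent thinning), using exactly the construction recalled in Subsection \ref{ppconstruction}. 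Taking $\nu = \lambda_1\mu$, $\nu' = \mu_\Lambda$ produces a homogeneous $\hat{\chi}$ of intensity $\lambda_1$ with $\hat{\chi}\subseteq\chi$; taking $\nu = \mu_\Lambda$, $\nu' = \lambda_2\mu$ produces a homogeneous $\hat{\chi}$ of intensity $\lambda_2$ with $\chi\subseteq\hat{\chi}$.

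The geometric input is that the occupied region is monotone in the configuration: if $\eta \subseteq \eta'$ then $\bigcup_{x\in\eta}B(x,R) \subseteq \bigcup_{x\in\eta'}B(x,R)$, so every occupied connected component of the smaller model lies inside an occupied connected component of the larger one; in particular percolation is an increasing event. For part a), I would couple $\hat{\chi}\subseteq\chi$ with intensity $\lambda_1$: if $(\chi,R)$ does not percolate then all components of its occupied region are bounded, hence so are those of the smaller occupied region of $(\hat{\chi},R)$, and the homogeneous model with intensity $\lambda_1$ does not percolate. For part b), I would couple $\chi\subseteq\hat{\chi}$ with intensity $\lambda_2$: an unbounded occupied component of $(\chi,R)$ is contained in an occupied component of $(\hat{\chi},R)$, which is therefore also unbounded, so the homogeneous model with intensity $\lambda_2$ percolates.

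The only genuine work is the routine verification that the superposition coupling is valid in this locally compact Polish generality and that the percolation event is measurable and increasing; both follow from the explicit Poisson construction of Subsection \ref{ppconstruction}. I expect the main obstacle to be purely bookkeeping, namely confirming that the global constants $\lambda_1,\lambda_2$ from the bounded-intensity hypothesis may indeed be taken uniform in $E$ (the intended reading, consistent with their use in Proposition \ref{proposicaomuestrela}) so that the dominations hold as inequalities of measures rather than merely setwise with $E$-dependent constants.
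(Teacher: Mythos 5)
Your proposal is correct and takes essentially the same route as the paper: the paper likewise sandwiches the inhomogeneous process between homogeneous Poisson processes of intensities $\lambda_1$ and $\lambda_2$ and transfers (non-)percolation through monotonicity of the occupied region under inclusion of point configurations. The only cosmetic difference is the realization of the nested coupling --- the paper thins a common homogeneous process of intensity $\lambda_2$ with retention probabilities $\lambda_1/\lambda_2$ and $\Lambda(m)/\lambda_2$, while you use superposition, which you yourself note is an equivalent device.
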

\begin{proof}

We begin by proving the first part of the proposition. Since $(\chi,R)$ has bounded intensity, then there exist $\lambda_1, \lambda_2$ such that $\lambda_1\mu(E)< \mu_\Lambda(E) <\lambda_2\mu(E)$, where $\mu_\Lambda(E) = \int_E \Lambda d\mu$. Then we may obtain a version of the Poisson point process with intensity measure $\lambda_1\mu$ by a $p_1$-thinning of the Poisson point process	with intensity measure $\lambda_2 \mu$ where $p_1:M\rightarrow [0,1]$ is given by $p_1(m)= \lambda_1/\lambda_2$. Analogously, we may obtain a version of the Poisson point process with intensity measure $\mu_{\Lambda}$ by a $p_2$-thinning of the Poisson point process with intensity measure $\lambda_2 \mu$ where $p_2:M \rightarrow [0,1]$ is given by $p_2(m) = \Lambda(m)/\lambda_2$.

Then the set of center of the random balls (with fixed radius $R$) of the Boolean model driven by a homogeneous Poisson process with intensity $\lambda_1$ are a.s. a subset of the center of the random balls of the Poisson Boolean model (with fixed radius $R$) driven by a Poisson process with intensity measure $\mu_{\Lambda}$. Therefore, if the Boolean model $(\chi,R)$ with bounded intensity measure $\mu_\Lambda$ does not percolate then the Boolean model driven by a homogeneous Poisson point process does not percolate too.

The proof for the supercritical case follows in the same lines of the proof above.

\end{proof}

\subsection*{Proof of Theorem \ref{teomaisimp}} 

\noindent Let $F: M \to N$ be  a mm-quasi-isometry with parameters $(\alpha,\beta,\gamma)$ and let $\bmodel$ be the homogeneous Boolean model in $M$ with intensity $\lambda$.
Assume that the Boolean model does not percolate. Since $R > \alpha \beta + 2 \alpha \gamma$, Theorem \ref{existqi} guarantees the existence of $R' > 0$ such that the (nonhomogeneous) induced Poisson Boolean model with intensity measure $\mu^\star$ does not percolate in $N$. It follows from proposition \ref{proposicaomuestrela} and the fact that $F$ is a \mmqi that $\mu^*$ is bounded with respect to $\mu'$.  It follows from Proposition \ref{homoqi} that we can homogenize the induced Poisson Boolean model into a Boolean model $(\chi ', R')$ that does not percolate in $N$. This completes the proof for the subcritical case.

The proof for the supercritical case follows in the same lines of the proof above.

$\hfill \square$

\section*{Acknowledgments} We thank Luiz Renato Fontes and Glauco Valle for invaluable contributions to a previous version of this work. We also thank Rafael Grisi for many fruitful discussions about this work.


\bibliographystyle{amsplain}
\bibliography{referencias}{}

\end{document}